\newtheorem{theorem}{Theorem}[section]
\newtheorem{lemma}{Lemma}[section]
\newtheorem{definition}{Definition}
\newtheorem{remark}{Remark}
\newtheorem{corollary}{Corollary}[section]
\numberwithin{equation}{section}
\def\Xint#1{\mathchoice
{\XXint\displaystyle\textstyle{#1}}%
{\XXint\textstyle\scriptstyle{#1}}%
{\XXint\scriptstyle\scriptscriptstyle{#1}}%
{\XXint\scriptscriptstyle\scriptscriptstyle{#1}}%
\!\int}
\def\XXint#1#2#3{{\setbox0=\hbox{$#1{#2#3}{\int}$ }
\vcenter{\hbox{$#2#3$ }}\kern-.6\wd0}}
\def\dashint{\Xint-}
\begin{document}

\title[]{Remarks on Sobolev-Morrey-Campanato spaces   defined  on $C^{0,\gamma}$ domains}

\author{Pier Domenico Lamberti}%
\address{Universit\`a degli Studi di Padova, Dipartimento di Matematica ``Tullio Levi-Civita'', Via Trieste 63, 35121 Padova, Italy}%
\email{lamberti@math.unipd.it}%
\author{Vincenzo Vespri}%
\address{Universit\`a degli Studi di Firenze, Dipartimento di Matematica e Informatica ``Ulisse Dini'', Viale Morgagni, 67/a,
50134 Firenze, Italy}%
\email{vespri@math.unifi.it}%

\subjclass[2010]{46E35,  46E30, 42B35}%
\keywords{Sobolev,  Morrey,  Campanato spaces, irregular domains.}

\maketitle

\noindent {\bf Abstract.}  We discuss a few old results  concerning embedding theorems for Campanato and Sobolev-Morrey spaces adapting the formulations
to the case of domains of class $C^{0,\gamma}$, and we present  more recent  results concerning  the extension of  functions from Sobolev-Morrey spaces  defined on those domains.  As a corollary of the extension theorem we obtain an embedding theorem for Sobolev-Morrey spaces on arbitrary  $C^{0,\gamma}$ domains. 

\section{ Introduction}

In the seminal papers \cite{campa1, campa2} Sergio Campanato introduced the spaces that nowadays are named after him,  and used them to prove embedding theorems for Sobolev-Morrey spaces defined on bounded open sets  $\Omega$ in $\mathbb{R}^n$. In particular, it was proved that if $f$ is a function belonging to the Campanato space $ {\mathcal L}^{\lambda}_{p }(\Omega )$ with $n<\lambda $  (and  $\lambda  \le n+p$)
then $f$ is H\"{o}lder continuous with exponent $(\lambda -n)/p  $ that is for some $c>0$
\begin{equation}
\label{intro1}
|f(x)-f(y)| \le c |x-y|^{\frac{\lambda -n}{p}},
\end{equation}
for all $x,y\in \Omega $, and it was also proved that if $f$ is a function in the Sobolev-Morrey space  $W^{l,\lambda}_p(\Omega )$  with $0\le \lambda <n$, $  n-\lambda <pl $  (and $pl < n-\lambda  +p$) then $f$ is H\"{o}lder  continuous with exponent $ l+\frac{\lambda -n}{p}$ that is   for some $c>0$
\begin{equation}
\label{intro2}
|f(x)-f(y)| \le c |x-y|^{l+\frac{\lambda -n}{p}},
\end{equation}
for all $x,y\in \Omega $. Here, for simplicty,  $l\in \mathbb{N}$ and  $W^{l,\lambda}_p(\Omega )$  is the space of functions with weak derivatives up to order $l$ in the classical Morrey space 
$L_p^{\lambda }(\Omega )$, but we note that  the focus of  \cite{campa1, campa2}  was mainly on the case of fractional order of smoothness $l$ since the case of integer exponents was already discussed in \cite{greco, nirenberg}.   See Section~\ref{embe} for precise definitions.  

The importance of these spaces is evident in  regularity theory and  harmonic analysis.
The classical  regularity approach was based on the singular integrals theory approach introduced by A.P.~Calder\'{o}n and A.~Zygmund~\cite{CZ}. Using this approach  based on the heat kernel, J.~Nash~\cite{N} was able to solve the XIX Hilbert problem about the analyticity of the solutions to regular problems in the calculus of variations. One year before, E.~De Giorgi~\cite{DG} proved the same result with a different approach. He introduced  a suitable function space, the so-called De Giorgi class, proved that any solution to regular problems in the calculus of variations belongs to this class  and showed the embeddings of the De Giorgi classes in the space of H\"older continuous  functions. It was a natural question to ask if this approach could recover the classical  Calder\'{o}n and Zygmund theory  for equations with regular coefficients (i.e.,  continuous or H\"older continuous coefficients). This question was proposed by Ennio De Giorgi and  Guido Stampacchia and solved by S. Campanato with the introduction of the Campanato spaces.  The regularity in $L^p$ spaces was proved by S.~Campanato and G.~Stampacchia~\cite{CS}  with the supplementary hypotheses of the H\"older continuity of the coefficients (for a proof of such a result, with only the assumption of the continuity of the coefficients, see \cite{CTV}). These spaces were used for proving regularity of solutions to elliptic/parabolic  systems/equations in variational/nonvariational form  of the second (and higher) order (see, for instance \cite{C} and \cite{G}).

The other important field of application of these function spaces is harmonic analysis. T.~Walsh~\cite{W} proved that the dual space of the Hardy space $H^p (R^ N)$ is exactly the Campanato space. The theory of Hardy spaces $H^p (R^N)$  has  important applications in harmonic analysis and partial differential equations (for instance,  see \cite{FS, S, S1, SW}). We recall that when $p \in (1, \infty)$, $L^ p (R^ N$) and $H^p (R^N)$  are isomorphic; but  when $p \in (0, 1]$, some of singular integrals (for example, Riesz transforms) are bounded on $H^p (R^N)$  but not on $L^p (R^N)$  and this fact makes the space $H^p (R^N)$  the right space where  to study  the theory of the boundedness of operators.   In \cite{FS}  C.~Fefferman and E.M.~Stein characterised   the Hardy space $ H^1 (R^N)$  as the predual of the space BMO$(R^N)$. The atomic and the molecular characterizations of  $H^p (R^N)$   and their applications were studied by many authors; see, for example, \cite{Co, Co1,  L, NS, SW, TW}. These  characterizations (atomic and  molecular) are necessary to extend   the theory of Hardy spaces  to spaces of homogeneous type in the sense of R.R.~Coifman and  G.~Weiss \cite{CoW, CoW1}, which is, by  far, one of the   most  general setting for singular integrals.

Going back to the initial work of S. Campanato, we note that inequality \eqref{intro1} was proved under the assumption that $\Omega$ satisfies the so-called property $(A)$ which requires the existence of a constant $M>0$ such that 
\begin{equation}
\label{conditiona}
|\Omega  \cap B(x,r)|\geq Mr^n\, ,
\end{equation}
for all $x\in \Omega$ and all $r>0$ smaller than the diameter of $\Omega$. 

 Inequality \eqref{intro2} was obtained under the stronger assumption that $\Omega$ is of class $C^{0,1}$, which means that, locally at the boundary,  $\Omega$ can be represented as the subgraph of a Lipschitz continuous function (possibly after a rotation of coordinates).  
 
 Note that for $\lambda =0$ we have $W^{l,\lambda}_p(\Omega )= W^{l}_p(\Omega )$ and inequality \eqref{intro2} is the celebrated Sobolev-Morrey inequality. 
 
 In this paper, we consider the case of open sets $\Omega$ of class $C^{0,\gamma}$ with $0<\gamma \le 1$ which means that the functions describing the boundary of $\Omega$ are H\"{o}lder continuous of exponent $\gamma$. It is a matter of folklore that passing from Lipschitz to H\"{o}lder continuity assumptions at the boundary of an open set is highly nontrivial (see e.g., the recent paper \cite{lapi}), and it is interesting to note that also S. Campanato himself devoted his paper \cite{campa3} to the study of embeddings for Sobolev spaces  on open sets  with power-type cusps at the boundary.    
 We refer to the extensive monograph \cite{mazpob} for a recent introduction to the analysis of function spaces on irregular domains.  We also refer to the classical monograph \cite{kufner} for an introduction to Morrey-Campanato spaces on regular domains. 
 
  Broadly speaking,  one may say that classical embedding theorems for Sobolev-Morrey-Campanato spaces hold on $C^{0,\gamma}$ domains provided 
 one replaces (in the inequalities  involved)  the dimension $n$  of the underlying space by  $n_{\gamma}=(n-1)/\gamma +1$, a fact which also appeared in  \cite{campa3}. 
 It is important to note that $n_{\gamma}>n$ if $\gamma <1$, and this typically leads to a deterioration in the estimates. 
 For instance,  if $\Omega$ is a domain with  outer  power-type   cusps with exponent $\gamma$, property (A) above holds provided  $n$ is replaced by $n_{\gamma}$ in the right-hand side of \eqref{conditiona}.  On the other hand, we observe that if one wishes to control $|\Omega  \cap B(x,r) |$ from above, the best one can do is to write 
 $|\Omega  \cap B(x,r) |\le cr^n   $, since it is impossible here to use $n_{\gamma}$.  This discrepancy between the  upper and lower  bounds for $|\Omega  \cap B(x,r) |$, indicates that the standard Euclidean metric is not suitable to deal with cusps and suggests to adapt the balls $B(x,r)$ to the type of domain under consideration.  For example, if $\Omega$ is given  by the cusp 
 \begin{equation}
 \label{cusp}
 \{(\bar x, x_n)\in \mathbb R^n:\ \bar x \in \mathbb{R}^{n-1}, \ x_n>|\bar x|^{\gamma}  \}
 \end{equation} with $\gamma <1$, then one should replace the Euclidean ball $B(x,r)$ by the 
anisotropic  ball 
\begin{equation}
\label{ballintro}B_{\gamma}(x,r)=\bigl\{y\in R^n:\ |\bar x-\bar y|<r^{\frac{1}{\gamma}},\ |x_n-y_n|<r     \bigr\}  
\end{equation}
in which case $ |\Omega  \cap B_{\gamma }(x,r) |$ is asymptotic to $r^{n_{\gamma}}   $ as $r\to 0$, and the discrepancy above disappears.  Accordingly, in the right-hand side of inequalities \eqref{intro1},  \eqref{intro2} one has to replace the Euclidean distance  $|x-y|$ by the anisotropic one $|\bar x -\bar y|^{\gamma} +|x_n-y_n| $.  This idea was already used  by G.C.~Barozzi~\cite{ba} where some results of S. Campanato are extended to the case of domains with power-type cusps, and was further extended  
by Giuseppe Da Prato in the fundamental paper  \cite{da} where more general metrics were considered. We also note that final results for domains satisfying horn-type conditions are  contained in the classical monograph \cite{be1, be2}.

 Although the existing literature  seems to provide a complete picture of this subject, we have found it quite surprising that some results contained in the above mentioned papers, incorporate quite restrictive assumptions.  In particular, in the analysis of inequality \eqref{intro2} for 
anisotropic metrics,  \cite[Theorem~3]{ba}  eventually  assumes for simplicity that $\Omega$ is a parallelepiped, \cite[Theorem~4.1]{da} assumes that $\Omega$ is convex
 and the estimate in \cite[Theorem~27.4.2]{be2} is  proved for all $x,y\in \Omega$ such that the segment $[x,y]$ is contained in $\Omega$.  
 
 A different approach to the analysis of function spaces in domains of class $C^{0,\gamma}$ was suggested by Victor I. Burenkov in \cite{burpaper2, burpaper1} where he defined a new extension operator 
 which, contrary to other  classical extension operators, allows to deal not only with Lipschitz domains but also  with  $C^{0,\gamma}$ domains (as well as with anisotropic Sobolev spaces and extensions from manifolds of dimension $m<n$).  Note that the flexibility of Burenkov's Extension Operator has been recently exploited in \cite{fala} where it is proved that this operator preserves general Sobolev-Morrey spaces, including the case of the classical Sobolev-Morrey spaces   $W^{l,\lambda}_p(\Omega )$.  
 
 If $\gamma <1$ then deterioration in the smoothness of the extended functions is  expected and, in fact, Burenkov's Extension Operator maps 
 the Sobolev space $W^{l}_p(\Omega)$ to the Sobolev space $W^{[\gamma l]}_p(\mathbb{R}^n)$ where $[\gamma l]$ is the integer part of $\gamma l$.  The exponent $[\gamma l]$ is sharp (in terms of Sobolev spaces). Thus, having a function extended to the whole of $\mathbb{R}^n$ allows to apply embedding theorems  in $\mathbb{R}^n$  and eventually to return to $\Omega$ by mere restriction. 
 Although the target space $W^{[\gamma l]}_p(\mathbb{R}^n)$ is sharp, it is observed  already in  \cite{burpaper1} that in general   the embedding theorems proved via this procedure are not sharp since the deterioration given by $[\gamma l]$ is too much for this purpose. However, this procedure has the advantage of giving at least  some information even in most difficult cases. 

 The goal of the present paper is twofold. First, we revise the above mentioned  old results by adapting their formulation to the case of elementary domains of class $C^{0,\gamma}$. In passing,  we also indicate how it is  possible to replace the convexity assumption  in \cite[Theorem~4.1]{da} by the assumption that  a Poincar\'{e} inequality for balls holds, see Theorem~\ref{dapratovariant}. Secondly, we indicate how to adapt  the proofs of \cite{fala} to  the case of domains of class $C^{0,\gamma}$, in order to  prove that Burenkov's Extension Operator maps the Sobolev-Morrey space  $W^{l,\lambda}_p(\Omega )$ to the Sobolev-Morrey space  $W^{[\gamma l],\gamma \lambda}_p(\mathbb R^n )$, analysing also the case of Morrey norms defined by even more general weights, see Theorem~\ref{sobolevmorreybis}.   Note the extra deterioration in the Morrey exponent which passes from $\lambda $ to $\gamma \lambda$. Moreover, we apply this extension result to recover an estimate of  type \eqref{intro2} in domains of class $C^{0,\gamma}$, see Corollary~\ref{maincor}. We observe  that, although the new estimate is not  sharp, it is obtained without any extra geometric assumptions on $\Omega$ or on the points $x,y\in \Omega$ as done by other authors.  
 
 It is important to observe that our extension result is obtained by using Morrey norms involving Euclidean balls both in $\Omega$ and in $\mathbb{R}^n$, even though
 for elementary domains of form \eqref{cusp} it would be natural to use anisotropic balls of  type \eqref{ballintro} in $\Omega$. This is due to technical reasons involved in our proofs,  which prevents us from controlling reflected balls
 in an anistropic way, see Lemma~\ref{inc0}. On the other hand, since our final goal is to deal with general open sets $\Omega$ of class $C^{0,\gamma}$ (where cusps may have a different orientation depending on the part of the boundary under consideration), in principle there is no special reason why one should use the balls of  type \eqref{ballintro} in the whole of $\Omega$.  Thus, either one changes the definition of the Morrey spaces, adapting balls to the orientation of each local chart or uses, for uniformity, Euclidean balls in the whole of $\Omega$. Our approach eventually leads us to choose the second option. 
 
 With reference to the problem of the extension of Sobolev-Morrey spaces, besides \cite{fala}, we would also like to quote the papers \cite{koskzhou}, \cite{lavio}, \cite{vitolo}.

\section{Embedding theorems on elementary $C^{0,\gamma}$ domains   }
\label{embe}

In this paper the elements of ${\mathbb R}^n$, $n\geq 2$, are denoted by  $x=(\overline x,x_n)$ with $\overline x=(x_1,\dots , x_{n-1})\in{\mathbb R}^{n-1}$ and $x_n\in {\mathbb R}$. 
For  any   $\gamma\in ]0,1]$,  we consider the metric  $\delta_{\gamma}$ in ${\mathbb R}^n$ defined by
$$
\delta_{\gamma}(x,y)= \max\{ |\bar x-\bar y|^{\gamma}, |x_n-y_n| \}\, ,
$$ 
for all $x,y\in {\mathbb R}^N$ and we denote by $B_{\gamma}(x,r)$ the corresponding open balls of centre $x$ and radius $r$, that is
\begin{eqnarray*}\lefteqn{
B_{\gamma}(x,r)=\{y\in {\mathbb R}^n:\ \delta_{\gamma}(x,y)<r   \} } \\
& & \qquad\qquad\qquad=\bigl\{y\in R^n:\ |\bar x-\bar y|<r^{\frac{1}{\gamma}},\ |x_n-y_n|<r     \bigr\}   . 
\end{eqnarray*}
Note  that the Lebesgue measure of $B_{\gamma}(x,y)$ is given by  
$$
|B_{\gamma}(x,r)   | =2\omega_{n-1}r^{n_{\gamma}}    
$$
where
$$
n_{\gamma}= \frac{n-1}{\gamma}+1\, ,
$$
and $\omega_{n-1}$ is the measure of the unit ball in ${\mathbb R}^{n-1}$.
Note also  that $n_{\gamma} =  n+(n-1)(\frac{1}{\gamma}-1   )$, hence $n_{\gamma}\geq n$ and equality occurs if and only if either $n=1$ or $\gamma=1$.

Given  $p\in [1,\infty [$,  a  function $\phi : ]0,\infty [\to ]0,\infty [$  and an open set $\Omega$ in ${\mathbb R}^n$,  for all $f\in L^p (\Omega )$ we set  
\begin{equation*}
\| f \|_{L^{\phi }_{p, \gamma }(\Omega ) }:=\sup_{x\in \Omega}\sup_{r>0 }\left(\frac{1}{\phi(r)}\int_{B_{\gamma}(x, r)\cap \Omega}|f(y) |^pdy \right)^{\frac{1}{p}}
\end{equation*}
and 
\begin{equation*}
| f |_{{\mathcal L}^{\phi  }_{p, \gamma }(\Omega ) }:=\sup_{x\in \Omega}\sup_{r>0 }\left(\frac{1}{\phi(r)}\int_{B_{\gamma}(x, r)\cap \Omega}|f(y)    
-\dashint_{B_{\gamma}(x, r)\cap \Omega}   f(z)dz  |^pdy \right)^{\frac{1}{p}}\, .
\end{equation*}

The generalised Morrey spaces are defined by 
$$
L^{\phi }_{p, \gamma }(\Omega )=\{ f\in  L^p (\Omega ):\ \| f \|_{L^{\phi }_{p, \gamma }(\Omega ) }< \infty   \} ,
$$
and the generalised Campanato spaces are defined by 
$$
{\mathcal L}^{\phi }_{p, \gamma }(\Omega )=\{ f\in  L^p (\Omega ):\ | f |_{{\mathcal L}^{\phi }_{p, \gamma }(\Omega ) }< \infty   \} . 
$$

For any $l\in {\mathbb N}$, we  consider  also the Sobolev-Morrey spaces 
$$
W^{l,\phi }_{p,\gamma}(\Omega )=\{ f\in L^p (\Omega):\ D^{\alpha }f\in    L^{\phi }_{p, \gamma }(\Omega ),\ \forall |\alpha|\le l  \}
$$ 
endowed with the norm
$$
\|  f \|_{W^{l,\phi}_{p,\gamma}(\Omega )}=\sum_{|\alpha | \le l}\| D^{\alpha }f \|_{ L^{\phi }_{p, \gamma }(\Omega )}\, .
$$

If    $\lambda \geq 0$  and   $\phi (r)=\min \{r^{\lambda }, 1 \}$ for all  $r>0$ then the corresponding spaces  will be denoted by 
$L^{\lambda }_{p, \gamma }(\Omega )$, ${\mathcal L}^{\lambda }_{p, \gamma }(\Omega )$,  $W^{l,\lambda }_{p,\gamma}(\Omega )$.
Since $| \cdot |_{{\mathcal L}^{\lambda }_{p, \gamma }(\Omega ) } $ is a semi-norm, it is customary to endow  the  Campanato space  ${\mathcal L}^{\lambda }_{p, \gamma }(\Omega )$ with the norm defined by 
$$
\| f \|_{{\mathcal L}^{\lambda  }_{p, \gamma }(\Omega ) } := \|f \|_{L^p(\Omega)}+ | f |_{{\mathcal L}^{\lambda  }_{p, \gamma }(\Omega ) } \, ,
$$
for all $f\in {\mathcal L}^{\lambda  }_{p, \gamma }(\Omega )$.

 Note that ${L^{\lambda }_{p, 1 }(\Omega ) }$,  ${{\mathcal{L}}^{\lambda }_{p, 1 }(\Omega ) }$ are 
the classical Morrey and Campanato spaces respectively   (recall that ${L^{\lambda }_{p, 1 }(\Omega ) }$ contains only the zero function for $\lambda >n$ and it coincides with $L^{\infty}(\Omega)$ for $\lambda =n$ by the Lebesgue differentiation theorem, see  \cite{kufner} for more details concerning the limiting cases). \\

 We consider  elementary H\"older continuous   domains   $\Omega$  in ${\mathbb R}^n$ with exponent $\gamma\in ]0,1]$ of the form 
  \begin{equation}\label{eldom}\Omega = \{x=(\overline x, x_n)\in {\mathbb R}^n:\ \bar x\in W,\  a< x_n<\varphi(\overline x) \}\,,
\end{equation}
where $-\infty \le a<\infty $,  $W$ is a  smooth or convex open set in ${\mathbb R}^{n-1}$, and $\varphi :  W\to {\mathbb R}$ is  a H\"older continuous function with exponent $\gamma$ satisfying  the condition $\varphi (\bar x)> a+\delta$ for some $\delta >0$. 
In particular, 
there exists a positive constant $M$ such that
\begin{equation}\label{lip1}
|\varphi(\overline x)-\varphi(\overline y)| \le M|\overline x-\overline y|^{\gamma}\,, \,\,\forall\  \overline x, \overline y\in {\mathbb R}^{n-1}\,.
\end{equation}
The best constant $M$ in inequality (\ref{lip1}) is denoted by ${\rm Lip}_{\gamma} \varphi$. 
For $\gamma=1$ we obtain Lipschitz continuous domains.
 It is well known that Lipschitz continuous domains  satisfy the usual cone condition. 
Similarly,  H\"older continuous domains satisfy a generalisation of that condition which we call the cusp condition. Namely, for any 
$x\in {\mathbb R}^n$ and $h>0$, we set 
\begin{equation} 
C_{\gamma}(x, h, M)=\{y\in {\mathbb R}^N:\   x_n-h<  y_n<x_n-M|\bar y -\bar x|^{\gamma}  \}
\end{equation}
and we call it a cusp with exponent $\gamma$, vertex $x$, height $h$  and opening $M$.  
Then we can prove the following simple lemma which, by the way, is essential in order to apply the general results of \cite{ba, be2, da}. 

\begin{lemma} \label{lemmacusp}
Let $\gamma\in ]0,1]$ and  $\Omega $ be an elementary H\"older continuous   domain   in ${\mathbb R}^n$ as in \eqref{eldom} with $W={\mathbb R}^{n-1}$ and $a=-\infty$. Then 
for all $x\in \bar \Omega$ and $h>0$,   we have
\begin{equation}
 \label{lemmacusp0}C_{\gamma}(x, h, {\rm Lip}_{\gamma}\varphi )\subset  \Omega .
 \end{equation}
 Moreover, there exists $c>0$ depending
only on $n,\gamma $ and ${\rm Lip }_{\gamma}\varphi $ such that 
\begin{equation}
 \label{lemmacusp1}
|B_{\gamma}(x,r)\cap \Omega| \geq cr^{n_{\gamma}},
\end{equation}
for all $x\in \bar \Omega$ and $r>0$.
\end{lemma}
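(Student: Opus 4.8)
The plan is to establish the inclusion \eqref{lemmacusp0} first and then deduce the measure bound \eqref{lemmacusp1} from it by exhibiting an explicit sub-region of $B_{\gamma}(x,r)\cap\Omega$. Since $\varphi$ is continuous and here $\Omega=\{(\bar x,x_n):x_n<\varphi(\bar x)\}$, I would begin by noting that $\bar\Omega=\{(\bar x,x_n):x_n\le\varphi(\bar x)\}$, so that any $x\in\bar\Omega$ satisfies $x_n\le\varphi(\bar x)$.

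For \eqref{lemmacusp0}, write $M={\rm Lip}_{\gamma}\varphi$, fix $x\in\bar\Omega$ and $h>0$, and take $y\in C_{\gamma}(x,h,M)$, i.e. $y_n<x_n-M|\bar y-\bar x|^{\gamma}$. Combining $x_n\le\varphi(\bar x)$ with \eqref{lip1} rewritten as $\varphi(\bar x)-M|\bar x-\bar y|^{\gamma}\le\varphi(\bar y)$ gives $y_n<\varphi(\bar y)$, that is $y\in\Omega$. This is a one-line computation and is the easy part.

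For \eqref{lemmacusp1}, the idea is to place inside $B_{\gamma}(x,r)\cap\Omega$ a truncated cusp of controlled measure. I would set
\[
E=\bigl\{y:\ |\bar y-\bar x|<\rho,\ x_n-r<y_n<x_n-M|\bar y-\bar x|^{\gamma}\bigr\},
\]
with horizontal radius $\rho=c_0 r^{1/\gamma}$ to be fixed, $c_0=c_0(\gamma,M)$. By construction $E\subset C_{\gamma}(x,r,M)\subset\Omega$ by \eqref{lemmacusp0}. Two requirements must be arranged: (i) $E\subset B_{\gamma}(x,r)$, which amounts to $\rho\le r^{1/\gamma}$, the vertical bound $|y_n-x_n|<r$ being automatic since $x_n-r<y_n\le x_n$ on $E$; and (ii) that the vertical fibres of $E$ be nondegenerate, which holds once $M\rho^{\gamma}\le r/2$. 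Both are guaranteed by taking $c_0=\min\{1,(2M)^{-1/\gamma}\}$.

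It then remains to compute, by Fubini,
\[
|E|=\int_{|\bar y-\bar x|<\rho}\bigl(r-M|\bar y-\bar x|^{\gamma}\bigr)\,d\bar y\ \ge\ \frac r2\,\omega_{n-1}\rho^{\,n-1}=\frac{\omega_{n-1}c_0^{\,n-1}}{2}\,r^{\,1+\frac{n-1}{\gamma}},
\]
and since $1+\frac{n-1}{\gamma}=n_{\gamma}$ this yields \eqref{lemmacusp1} with $c=\omega_{n-1}c_0^{\,n-1}/2$, depending only on $n,\gamma,M$. I expect the only delicate point to be the choice of $\rho$: the anisotropy of $\delta_{\gamma}$ forces the horizontal and vertical extents to scale like $r^{1/\gamma}$ and $r$ respectively, and $c_0$ must absorb the opening $M$ so that the truncated cusp fits inside $B_{\gamma}(x,r)$ while still retaining a fixed proportion of its height; this matching is the main (and essentially only) obstacle.
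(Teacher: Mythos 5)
Your proof is correct and follows essentially the same route as the paper: the inclusion \eqref{lemmacusp0} is proved by the identical one-line chain of inequalities, and the bound \eqref{lemmacusp1} is obtained, as in the paper, by exhibiting an explicit cusp-shaped subregion of $B_{\gamma}(x,r)\cap \Omega$ whose measure scales like $r^{n_{\gamma}}$. The only cosmetic difference is that you shrink the cusp by truncating it to the cylinder $|\bar y-\bar x|<c_0 r^{1/\gamma}$, whereas the paper enlarges the opening to $\max\{1,{\rm Lip}_{\gamma}\varphi\}$ so that the cusp $C_{\gamma}(x,r,\cdot)$ itself fits inside $B_{\gamma}(x,r)$.
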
 

\begin{proof}  Given a cusp $C_{\gamma}(x, h, {\rm Lip}_{\gamma}\varphi )$ as in the statement, for any point $y\in C_{\gamma}(x, h, {\rm Lip}_{\gamma}\varphi ) $
we have
$$
y_n<x_n- {\rm Lip}_{\gamma } \varphi\,  |\bar x-\bar y |^{\gamma}\le \varphi (\bar x)-     {\rm Lip}_{\gamma } \varphi\,  |\bar x-\bar y |^{\gamma}   \le \varphi (\bar y)\, ,
$$
where the third inequality follows from the H\"{o}lder continuity of $\varphi$. Thus, 
$C_{\gamma}(x, h, {\rm Lip}_{\gamma}\varphi )\subset \Omega$. 
Inequality \eqref{lemmacusp1}, easily follows from  \eqref{lemmacusp0}, the inclusion 
$C_{\gamma}(x, r, 1)\subset B_{\gamma}(x,r)$ and 
the fact that  $|C_{\gamma}(x, h, M)|=ch^{n_{\gamma}} $ where $c$ is a positive constant depending only on $n, \gamma ,M $.
\end{proof}

Given two function spaces $X(\Omega)$ $Y(\Omega)$, we write $X(\Omega)\simeq Y(\Omega)$ to indicate that any function $f\in X(\Omega )$
equals almost everywhere in $\Omega $ a function $g\in Y(\Omega) $ and viceversa, and that the two norms $\| \cdot \|_{X(\Omega )} $,
 $\| \cdot \|_{Y(\Omega )} $ are equivalent. Note that, for the sake of simplicity, two functions $f,g$ as above will be denoted by the same symbol (being aware of this identification is particularly important when stating H\"{o}lder continuity estimates). 

The following theorem can be deduced by the general result \cite[Theorem~3.1]{da} combined with inequality  \eqref{lemmacusp1} which guarantees that  $\Omega$ is of  type (A) as required  in  \cite[Theorem~3.1]{da}.   
Here,   $C^{0,\alpha }(\bar \Omega , \delta_{\gamma})$ denotes the space of H\"{o}lder continuous functions with exponent $\alpha$  with respect to the metric $\delta_{\gamma}$.

\begin{theorem}[Campanato-Da Prato]\label{campa}
Let $\Omega $ be a bounded elementary H\"{o}l\-der continuous domain with exponent $\gamma \in ]0,1]$, $\lambda >0$. The following 
statements hold:
\begin{itemize}
\item[(i)] If $\lambda < n_{\gamma}$ then  $ {\mathcal{L}}^{\lambda }_{p, \gamma }(\Omega )\simeq {L}^{\lambda }_{p, \gamma }(\Omega )$. 
\item[(ii)]  If $\lambda > n_{\gamma}$ then  $ {\mathcal{L}}^{\lambda }_{p, \gamma }( \Omega )\simeq C^{0,\alpha }(\bar \Omega , \delta_{\gamma})$ 
where 
$$
\alpha  =\frac{\lambda -n_{\gamma }}{p}\, ;
$$
in particular, there exists $c>0$ such that  for all   $f\in    {\mathcal{L}}^{\lambda }_{p, \gamma }( \Omega )$ and   for all $x,y\in \Omega $
we have 
\begin{equation}
\label{campa2}
|f(x)-f(y)|\le c  | f|_{ {\mathcal{L}}^{\lambda }_{p, \gamma }( \Omega )}  (|\bar x-\bar y|^{\gamma}+|x_n-y_n|)^{\frac{\lambda -n_{\gamma } }{p} }\, .
\end{equation}

\end{itemize}

\end{theorem}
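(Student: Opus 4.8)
The key structural observations that guide my plan are two. First, $\delta_{\gamma}$ is a genuine metric: the subadditivity $(a+b)^{\gamma}\le a^{\gamma}+b^{\gamma}$ valid for $0<\gamma\le 1$ gives the triangle inequality for the term $|\bar x-\bar y|^{\gamma}$, and a maximum of two metrics is again a metric. Second, by the identity $|B_{\gamma}(x,r)|=2\omega_{n-1}r^{n_{\gamma}}$ together with the lower bound \eqref{lemmacusp1}, both $|B_{\gamma}(x,r)\cap\Omega|$ and $|B_{\gamma}(x,r)|$ are comparable to $r^{n_{\gamma}}$ for every $x\in\bar\Omega$. This places us exactly in Campanato's framework with the Euclidean dimension $n$ replaced by $n_{\gamma}$ and the Euclidean distance replaced by $\delta_{\gamma}$, so the quickest route is to verify that $\Omega$ is of type $(A)$ in the sense of \cite{da} — which is precisely the content of \eqref{lemmacusp1} — and invoke \cite[Theorem~3.1]{da}. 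For completeness I would also reconstruct the argument directly.

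Writing $f_E:=\dashint_E f$, one inclusion is immediate and holds for every $\lambda$: by Jensen's inequality $\bigl(\int_{B_{\gamma}(x,r)\cap\Omega}|f-f_{B_{\gamma}(x,r)\cap\Omega}|^p\bigr)^{1/p}\le 2\bigl(\int_{B_{\gamma}(x,r)\cap\Omega}|f|^p\bigr)^{1/p}$, whence $|f|_{{\mathcal L}^{\lambda}_{p,\gamma}(\Omega)}\le 2\|f\|_{L^{\lambda}_{p,\gamma}(\Omega)}$. For the reverse inclusion in part (ii), if $f\in C^{0,\alpha}(\bar\Omega,\delta_{\gamma})$ with $\delta_{\gamma}$-H\"older seminorm $[f]_{\alpha}$, then on each ball $|f(y)-f_{B_{\gamma}(x,r)\cap\Omega}|\le [f]_{\alpha}(2r)^{\alpha}$ because the $\delta_{\gamma}$-diameter of $B_{\gamma}(x,r)$ is at most $2r$; integrating, using $|B_{\gamma}(x,r)\cap\Omega|\le 2\omega_{n-1}r^{n_{\gamma}}$ and the identity $\alpha p+n_{\gamma}=\lambda$ gives $|f|_{{\mathcal L}^{\lambda}_{p,\gamma}(\Omega)}\le C[f]_{\alpha}$.

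The heart of the matter is the telescoping estimate for averages. For $x\in\bar\Omega$ and a dyadic radius $\rho$, the inclusion $B_{\gamma}(x,\rho/2)\cap\Omega\subset B_{\gamma}(x,\rho)\cap\Omega$, Jensen's inequality, the lower bound \eqref{lemmacusp1} applied at scale $\rho/2$, and the definition of the Campanato seminorm give
\begin{equation*}
|f_{B_{\gamma}(x,\rho/2)\cap\Omega}-f_{B_{\gamma}(x,\rho)\cap\Omega}|\le C\,|f|_{{\mathcal L}^{\lambda}_{p,\gamma}(\Omega)}\,\rho^{\frac{\lambda-n_{\gamma}}{p}}.
\end{equation*}
When $\lambda<n_{\gamma}$ the exponent is negative, and summing this bound from scale $r$ up to the $\delta_{\gamma}$-diameter of $\Omega$ controls $|f_{B_{\gamma}(x,r)\cap\Omega}|$ by $C\bigl(\|f\|_{L^p(\Omega)}+|f|_{{\mathcal L}^{\lambda}_{p,\gamma}(\Omega)}\,r^{(\lambda-n_{\gamma})/p}\bigr)$; inserting this into $\int_{B_{\gamma}(x,r)\cap\Omega}|f|^p\le C\int_{B_{\gamma}(x,r)\cap\Omega}|f-f_{B_{\gamma}(x,r)\cap\Omega}|^p+C|f_{B_{\gamma}(x,r)\cap\Omega}|^p|B_{\gamma}(x,r)\cap\Omega|$ and dividing by $\phi(r)=r^{\lambda}$ yields the Morrey bound, proving (i). When $\lambda>n_{\gamma}$ the exponent $\alpha=(\lambda-n_{\gamma})/p$ is positive, so the series converges as $r\to0$: the averages $f_{B_{\gamma}(x,r)\cap\Omega}$ form a Cauchy net whose limit defines a representative $f(x)$ (agreeing a.e.\ with $f$ by Lebesgue differentiation in this doubling setting), with $|f(x)-f_{B_{\gamma}(x,r)\cap\Omega}|\le C|f|_{{\mathcal L}^{\lambda}_{p,\gamma}(\Omega)}\,r^{\alpha}$. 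To estimate $|f(x)-f(y)|$ I would set $R=\delta_{\gamma}(x,y)$, use the triangle inequality to note $B_{\gamma}(y,R)\cap\Omega\subset B_{\gamma}(x,2R)\cap\Omega$, and compare both $f_{B_{\gamma}(x,R)\cap\Omega}$ and $f_{B_{\gamma}(y,R)\cap\Omega}$ with the common average $f_{B_{\gamma}(x,2R)\cap\Omega}$ exactly as above; combining the three bounds gives $|f(x)-f(y)|\le C|f|_{{\mathcal L}^{\lambda}_{p,\gamma}(\Omega)}R^{\alpha}$, which is \eqref{campa2} since $R=\delta_{\gamma}(x,y)$ is comparable to $|\bar x-\bar y|^{\gamma}+|x_n-y_n|$.

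The step I expect to be the main obstacle is controlling averages over the truncated balls $B_{\gamma}(x,r)\cap\Omega$ rather than over full balls: without a lower volume bound, passing from the Campanato oscillation on $B_{\gamma}(x,r)\cap\Omega$ to pointwise control of $f_{B_{\gamma}(x,r)\cap\Omega}$ would fail near the cusp. This is precisely where \eqref{lemmacusp1} is indispensable, since it makes $|B_{\gamma}(x,r)\cap\Omega|$ comparable to $r^{n_{\gamma}}$ uniformly up to the boundary and thereby legitimises every division by the measure of a truncated ball in the telescoping argument. The remaining points — that boundedness of $\Omega$ terminates the telescoping at a finite scale, that $f_{\Omega}$ is controlled by $\|f\|_{L^p(\Omega)}$, and that the transition from $\phi(r)=r^{\lambda}$ to $\phi(r)=1$ across $r=1$ affects only harmless bounded factors — are routine.
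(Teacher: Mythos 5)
Your proposal is correct and follows essentially the same route as the paper, whose entire proof consists of verifying that $\Omega$ satisfies condition $(A)$ with exponent $n_{\gamma}$ via inequality \eqref{lemmacusp1} of Lemma~\ref{lemmacusp} and then invoking \cite[Theorem~3.1]{da}. Your additional self-contained reconstruction (Jensen for the easy inclusion, the dyadic telescoping of averages over $B_{\gamma}(x,r)\cap\Omega$ legitimised by \eqref{lemmacusp1}, and the comparison of $f_{B_{\gamma}(x,R)\cap\Omega}$ and $f_{B_{\gamma}(y,R)\cap\Omega}$ through the common ball $B_{\gamma}(x,2R)$) is sound and is precisely the classical Campanato argument underlying Da Prato's theorem, which the paper delegates to the reference rather than reproducing.
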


The following result is direct application of a general result  in \cite[Theorem~27.4.2, Remark~27.4.3]{be2} combined  with inclusion \eqref{lemmacusp0} which guarantees that $\Omega$ satisfies the $\gamma$-horn condition described in \cite[p. 153]{be1}.    As customary, we denote by $[x,y]$ the segment connecting two points $x$ and $y$ in ${\mathbb R}^n$.

\begin{theorem}[Sobolev-Morrey Embedding for elementary $C^{0,\gamma }$ domains]  \label{besov}
 Let $\Omega $ be an elementary H\"{o}lder continuous domain with exponent $\gamma \in ]0,1]$. Let $l\in {\mathbb N}$, $\lambda  >0$, $p\in [1, \infty [$ be such that 
 $$
 pl> n_{\gamma} -\lambda 
 $$
 and\footnote{If viceversa $\gamma (l+\frac{\lambda -n_{\gamma } }{p}   )>1$  then one has Lipschitz continuity; in the case  $\gamma (l+\frac{\lambda -n_{\gamma } }{p}   )=1$  one gets H\"{o}lder continuity  with any exponent less than $1$.  } $\gamma (l+\frac{\lambda -n_{\gamma } }{p}   )<1$. 
 Then there exists $c>0$ such that for all $f\in W^{l,\lambda}_{p,\gamma}(\Omega ) $ and for all $x,y\in \Omega $ such that 
 $[x,y]\subset \Omega $ we have   
 \begin{equation}\label{besov1}
 |f(x)-f(y)|\le c  \| f\|_{ W^{l,\lambda}_{p,\gamma}(\Omega )   }  |x-y|^{\gamma \left(l+\frac{\lambda -n_{\gamma } }{p}   \right)}\, .
 \end{equation}
\end{theorem}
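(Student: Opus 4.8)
The plan is to deduce Theorem~\ref{besov} from the general Besov-space embedding \cite[Theorem~27.4.2, Remark~27.4.3]{be2}, whose hypothesis is a geometric $\gamma$-horn condition on the domain. The first step is to verify that an elementary H\"older continuous domain $\Omega$ of exponent $\gamma$ satisfies this $\gamma$-horn condition; this is exactly what the inclusion \eqref{lemmacusp0} of Lemma~\ref{lemmacusp} provides. Indeed, \eqref{lemmacusp0} says that at every boundary (and interior) point $x\in\bar\Omega$ one can inscribe a full cusp $C_{\gamma}(x,h,\mathrm{Lip}_{\gamma}\varphi)$ inside $\Omega$, and such a translated-and-scaled family of cusps is precisely the data required to build the horn described in \cite[p.~153]{be1}. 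So the opening step is essentially a dictionary check: identify the cusp of Lemma~\ref{lemmacusp} with the $\gamma$-horn of \cite{be1, be2} and record the correspondence of parameters.

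Once the horn condition is in place, I would invoke the Besov embedding theorem in the correct parameter regime. The key is the substitution principle emphasised in the introduction: classical embedding statements carry over to the $C^{0,\gamma}$ setting once one replaces the dimension $n$ by $n_{\gamma}=(n-1)/\gamma+1$. Thus the classical Sobolev-Morrey condition $pl>n-\lambda$ becomes $pl>n_{\gamma}-\lambda$, and the classical H\"older exponent $l+(\lambda-n)/p$ becomes $l+(\lambda-n_{\gamma})/p$. The theorem in \cite{be2} yields H\"older continuity of $f$ with respect to the anisotropic metric $\delta_{\gamma}$ with this latter exponent; translating the anisotropic H\"older modulus $(|\bar x-\bar y|^{\gamma}+|x_n-y_n|)^{(l+(\lambda-n_{\gamma})/p)}$ into a bound in the Euclidean distance $|x-y|$ costs a factor $\gamma$ in the exponent, which is the source of the final exponent $\gamma(l+(\lambda-n_{\gamma})/p)$ appearing in \eqref{besov1}. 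The side condition $\gamma(l+(\lambda-n_{\gamma})/p)<1$ guarantees that we remain in the genuinely sub-Lipschitz H\"older range, as recorded in the footnote.

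The main obstacle, and the point that most needs care, is the segment restriction $[x,y]\subset\Omega$ and the precise matching of the estimate in \cite{be2} to the anisotropic-versus-Euclidean conversion. The result in \cite{be2} is, as the authors note in the introduction, stated for points $x,y$ whose connecting segment lies in $\Omega$, and this restriction is inherited verbatim in \eqref{besov1}; so no work is needed to remove it here (that removal is precisely what the later extension-based Corollary~\ref{maincor} is designed to achieve). The delicate verification is therefore the exponent bookkeeping: one must confirm that the norm controlling the estimate is exactly the Sobolev-Morrey norm $\|f\|_{W^{l,\lambda}_{p,\gamma}(\Omega)}$ built from the anisotropic balls $B_{\gamma}(x,r)$, that the weight $\phi(r)=\min\{r^{\lambda},1\}$ feeding into the Morrey seminorm produces the shift by $\lambda$ rather than by some rescaled exponent, and that passing from $\delta_{\gamma}$-H\"older continuity to $|x-y|$-H\"older continuity introduces the factor $\gamma$ cleanly. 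Since $\delta_{\gamma}(x,y)=\max\{|\bar x-\bar y|^{\gamma},|x_n-y_n|\}$ is comparable to $|\bar x-\bar y|^{\gamma}+|x_n-y_n|$ and dominated by a constant times $|x-y|^{\gamma}$ for $|x-y|$ bounded, this conversion is routine but must be done explicitly to land on the stated power of $|x-y|$.
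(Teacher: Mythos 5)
Your proposal is correct and follows essentially the same route as the paper: the paper's proof is precisely the observation that inclusion \eqref{lemmacusp0} of Lemma~\ref{lemmacusp} gives the $\gamma$-horn condition of \cite[p.~153]{be1}, after which \cite[Theorem~27.4.2, Remark~27.4.3]{be2} applies directly, with the segment restriction $[x,y]\subset\Omega$ inherited from that result. Your additional bookkeeping (the replacement of $n$ by $n_{\gamma}$, and the conversion from the anisotropic modulus to the Euclidean one, which is legitimate since $|\bar x-\bar y|^{\gamma}+|x_n-y_n|\le c|x-y|^{\gamma}$ for bounded $|x-y|$ and the exponent $l+\frac{\lambda-n_{\gamma}}{p}$ is positive by hypothesis) matches what the paper leaves implicit.
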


Note that by setting formally $l=0$ in  \eqref{besov1},  one essentially obtains  estimate \eqref{campa2}. 
It is interesting to observe that  the previous result (with minor modifications) was proved in \cite{ba} in the case of a parallelepiped.

\begin{theorem}[Barozzi]\label{barozzi}  
Let $\Omega $ be a parallelepiped  in ${\mathbb R}^n$ of the form 
$\Omega =\Pi_{i=1}^n]a_i,b_i[$ with $-\infty <a_i<b_i<\infty $ for all $i=1,\dots ,n$. Let 
 $\gamma \in ]0, 1]$, $l\in {\mathbb N}$, $\lambda  >0$, $p\in [1, \infty [$ be such that 
 $$
 pl> n_{\gamma} -\lambda \, 
 $$
 and such that $ l+\frac{\lambda -n_{\gamma } }{p}\le 1$. 
 Then for any $\epsilon>0$ there exists $c>0$ such that for all $f\in W^{l,\lambda}_{p,\gamma}(\Omega ) $ and for all $x,y\in \Omega $ we have
$$
 |f(x)-f(y)|\le c \|f \|_{W^{l,\lambda}_{p,\gamma}(\Omega ) }(|\bar x-\bar y|^{\gamma}+|x_n-y_n|)^{l+\frac{\lambda -n_{\gamma } }{p} -\epsilon}\, .
 $$
\end{theorem}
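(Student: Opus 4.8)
The plan is to reduce the statement to the Campanato embedding of Theorem~\ref{campa}(ii). A parallelepiped is a bounded elementary H\"older continuous domain in the sense of \eqref{eldom} (the profile $\varphi$ being constant, hence H\"older of every exponent $\gamma$) and it is convex, so that by the elementary argument behind Lemma~\ref{lemmacusp} it satisfies the uniform measure bound $|B_\gamma(x,r)\cap\Omega|\ge cr^{n_\gamma}$, i.e. property (A). Hence, once one establishes the embedding
$$
W^{l,\lambda}_{p,\gamma}(\Omega)\hookrightarrow {\mathcal L}^{\mu}_{p,\gamma}(\Omega),\qquad \mu=\lambda+pl,
$$
the hypothesis $pl>n_\gamma-\lambda$ reads exactly $\mu>n_\gamma$, and Theorem~\ref{campa}(ii) delivers H\"older continuity in the metric $\delta_\gamma$ with exponent $(\mu-n_\gamma)/p=l+(\lambda-n_\gamma)/p$ for \emph{all} $x,y\in\Omega$. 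Since $\delta_\gamma(x,y)$ is comparable to $|\bar x-\bar y|^\gamma+|x_n-y_n|$, this yields the asserted H\"older estimate; the loss $\epsilon$ will enter only in establishing the embedding itself, as explained below. Thus the whole proof reduces to the displayed Morrey-to-Campanato embedding, and the parallelepiped enters only through property (A) and the uniform shape of the sets $B_\gamma(x,r)\cap\Omega$.

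The engine is a first-order anisotropic Poincar\'e inequality on the boxes $B=B_\gamma(x,r)\cap\Omega$, whose side lengths are of order $r^{1/\gamma}$ in the $\bar x$-directions and $r$ in the $x_n$-direction: for a Sobolev function $g$ I would first establish
$$
\int_B\Bigl|g-\dashint_B g\Bigr|^p\,dy\le c\Bigl(r^{p/\gamma}\sum_{i<n}\int_B|\partial_i g|^p\,dy+r^{p}\int_B|\partial_n g|^p\,dy\Bigr).
$$
The two weights are the $p$-th powers of the respective side lengths, and since $r^{p/\gamma}\le r^{p}$ for $r\le1$ the $x_n$-term dominates. Inserting the Morrey bound $\int_B|\partial_i g|^p\le c\,r^{\sigma}\|\partial_i g\|_{L^{\sigma}_{p,\gamma}}^p$ yields, whenever $\nabla g\in L^\sigma_{p,\gamma}$, the single-scale estimate $\int_B|g-\dashint_B g|^p\le c\,r^{\sigma+p}\|\nabla g\|_{L^\sigma_{p,\gamma}}^p$, a gain of $p$ in the exponent. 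One iterates this by induction on $l$: each first derivative of $f$ lies in $W^{l-1,\lambda}_{p,\gamma}$, hence by the inductive hypothesis in ${\mathcal L}^{\lambda+p(l-1)}_{p,\gamma}$, which in the subcritical range $\lambda+p(l-1)<n_\gamma$ coincides with $L^{\lambda+p(l-1)}_{p,\gamma}$ by Theorem~\ref{campa}(i) (if instead $\lambda+p(l-1)>n_\gamma$ the derivative is bounded and one simply caps the exponent at $n_\gamma$); feeding this into the oscillation estimate with $\sigma=\lambda+p(l-1)$ raises the exponent of $f$ to $\lambda+pl$. This gain of Morrey regularity for the intermediate derivatives $D^\beta f$, $|\beta|<l$, is exactly what makes the higher-order case work, the constraint $l+(\lambda-n_\gamma)/p\le1$ (equivalently $\mu\le n_\gamma+p$) ensuring that we stay in the range where the mean-oscillation Campanato space detects genuine H\"older continuity and no higher-degree polynomials are required.

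The hard part, and the precise origin of the loss $\epsilon$, is the borderline case in which an intermediate exponent $\lambda+pk$ with $0\le k\le l-1$ equals the critical value $n_\gamma$. There the passage ``Campanato implies Morrey'' fails, because ${\mathcal L}^{n_\gamma}_{p,\gamma}$ is of BMO type and strictly contains $L^{n_\gamma}_{p,\gamma}=L^\infty$, so one cannot deduce a clean Morrey bound with exponent $n_\gamma$ for $D^\beta f$. The remedy is to accept the slightly smaller exponent $n_\gamma-p\epsilon$ at that stage, which a BMO function always satisfies (up to a logarithmic factor absorbed into any $\epsilon>0$), and this propagates to the final exponent $l+(\lambda-n_\gamma)/p-\epsilon$. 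Note that this critical situation is unavoidable at the Lipschitz endpoint $\mu=n_\gamma+p$, where necessarily $\lambda+p(l-1)=n_\gamma$, which is why the theorem is phrased with the loss for all admissible parameters; away from such critical exponents the same argument is $\epsilon$-free. Besides this borderline bookkeeping, the only routine technical point is the uniformity of the anisotropic Poincar\'e constant over the truncated boxes $B_\gamma(x,r)\cap\Omega$ meeting the faces and corners of the parallelepiped, where the product structure of $\Omega$ is used.
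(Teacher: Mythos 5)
Your proposal is correct, but there is nothing to match it against inside the paper: Theorem~\ref{barozzi} is quoted from Barozzi's paper \cite{ba} (Theorem~3 there) and no proof is given in the text, so what you have produced is a self-contained reconstruction, and it is a genuinely worthwhile one because it runs entirely on the paper's own toolkit. In effect you give an $l$-iterated, sharpened version of the scheme the paper uses around Theorems~\ref{daprato} and~\ref{dapratovariant}: a Poincar\'{e} inequality on the sets $B_\gamma(x,r)\cap\Omega$, insertion of the Morrey bound on the gradient, passage to a Campanato seminorm, and then Theorem~\ref{campa}(ii), which is valid for all $x,y\in\Omega$ and needs only the measure bound \eqref{lemmacusp1}. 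The genuine difference from the route displayed in the paper is the input Poincar\'{e} inequality: the paper's derivation via \eqref{poincare0} measures $\Omega\cap B_\gamma(x,r)$ through its Euclidean diameter $d_r\approx r$, and the factor $d_r^n$ produces the degraded exponent $\eta=\frac{n_\gamma}{n}+n-n_\gamma<1$ of Theorem~\ref{daprato}, whereas your anisotropic rescaling of the boxes (side lengths $r^{1/\gamma}$ and $r$) yields \eqref{poin} with the optimal $\tilde\eta=1$; this is precisely what the product structure of the parallelepiped buys, and it explains why Barozzi's exponent $l+\frac{\lambda-n_\gamma}{p}$ beats Da Prato's $\eta+\frac{\lambda-n_\gamma}{p}$. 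Your induction on $l$, with Theorem~\ref{campa}(i) converting Campanato back to Morrey at each intermediate exponent $\lambda+pk$, is the right mechanism, and your bookkeeping is in fact slightly conservative: under the hypotheses one has $\lambda+pk\le n_\gamma$ for all $k\le l-1$, with equality possible only at $k=l-1$ (the endpoint $l+\frac{\lambda-n_\gamma}{p}=1$), so the supercritical ``capping'' branch you describe never actually occurs, and the only genuine loss is the BMO-type borderline, which your telescoping-plus-logarithm estimate handles correctly and which accurately accounts for Barozzi's $\epsilon$ (your argument even shows the estimate is $\epsilon$-free when $l+\frac{\lambda-n_\gamma}{p}<1$ strictly). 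Two details to nail down in a full write-up: Lemma~\ref{lemmacusp} is stated for $W={\mathbb R}^{n-1}$ and $a=-\infty$, so the property-(A) bound \eqref{lemmacusp1} for the bounded parallelepiped requires the easy direct verification you implicitly invoke; and the uniformity of the anisotropic Poincar\'{e} constant on $B_\gamma(x,r)\cap\Omega$ near faces and corners should be carried out via the rescaling you sketch, observing that the rescaled intersections are convex sets of bounded diameter with measure bounded below, so that \eqref{poincare0} applies with a uniform constant.
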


Moreover, the following theorem can be deduced by a more general result  obtained  by G. Da Prato in \cite[Theorem 4.1]{da} for $l=1$ in the case of a convex set.  

\begin{theorem}\label{daprato} Let $\Omega$ be a bounded convex domain  in ${\mathbb R}^n$.  Let 
 $\gamma \in ]0, 1]$, and $\eta =\frac{n_{\gamma}}{n}+n-n_{\gamma}    $.  Let  $\lambda  >0$, $p\in [1, \infty [$ be such that 
 $$
 p \eta > n_{\gamma} -\lambda \, .
 $$
 Then  there exists $c>0$ such that for all $f\in W^{1,\lambda}_{p,\gamma}(\Omega ) $ and for all $x,y\in \Omega $ we have
$$
 |f(x)-f(y)|\le c\| f\|_{W^{1,\lambda}_{p,\gamma}(\Omega ) } (|\bar x-\bar y|^{\gamma}+|x_n-y_n|)^{  \eta   +\frac{\lambda -n_{\gamma } }{p} }\, .
 $$
\end{theorem}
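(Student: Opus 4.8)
The plan is to estimate the two-point difference $|f(x)-f(y)|$ directly, using the convexity of $\Omega$ to represent $f$ through its gradient and then inserting the Morrey bound on $\nabla f$; the exponent $\eta$ will emerge from the interplay between the Euclidean geometry of the representation and the anisotropic scaling of the balls $B_\gamma$. First I would fix $x,y\in\Omega$, put $r=\delta_\gamma(x,y)$, and choose an anisotropic ball so that $U:=B_\gamma(z,cr)\cap\Omega$ is a convex set containing both $x$ and $y$ (it is convex, being the intersection of the box $B_\gamma(z,cr)$ with the convex set $\Omega$). Writing $f_U$ for the mean value of $f$ on $U$, I split $|f(x)-f(y)|\le|f(x)-f_U|+|f(y)-f_U|$ and estimate each term by the classical representation valid on convex sets,
$$
|f(\xi)-f_U|\le C\,\frac{(\operatorname{diam}U)^{n}}{|U|}\int_{U}\frac{|\nabla f(w)|}{|\xi-w|^{n-1}}\,dw,\qquad \xi\in\{x,y\}.
$$
Here $\operatorname{diam}U\lesssim r$, so the whole deterioration for $\gamma<1$ is carried by the prefactor $(\operatorname{diam}U)^{n}/|U|$, and controlling it against $n_\gamma$ already requires a careful lower bound for $|U|$ of the type provided by Lemma~\ref{lemmacusp}; this is a delicate point, since in the anisotropic metric the behaviour of the volume of $U$ is where the geometry of the problem really enters.

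Second, and this is the heart of the argument, I would bound the Riesz-type integral using the hypothesis $\nabla f\in L^{\lambda}_{p,\gamma}(\Omega)$. Decomposing $U$ into anisotropic shells $\{w:\delta_\gamma(\xi,w)\approx 2^{-k}r\}$, applying H\"older's inequality on each shell, and inserting the Morrey growth $\int_{B_\gamma(\xi,\rho)\cap\Omega}|\nabla f|^{p}\le\|\nabla f\|_{L^{\lambda}_{p,\gamma}(\Omega)}^{p}\,\rho^{\lambda}$, one is led to a geometric series whose convergence and strictly positive total exponent are exactly guaranteed by the hypothesis $p\eta>n_\gamma-\lambda$ (indeed $\eta+\tfrac{\lambda-n_\gamma}{p}>0$ is equivalent to $p\eta>n_\gamma-\lambda$). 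Collecting the powers of $r$ coming from the prefactor, from the shell summation of the Euclidean kernel, and from the Morrey factor $r^{\lambda/p}$, and simplifying, should produce the exponent $\eta+\tfrac{\lambda-n_\gamma}{p}$ with $\eta=\tfrac{n_\gamma}{n}+n-n_\gamma$, which is the asserted inequality.

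The main obstacle I anticipate is precisely this potential estimate: the kernel $|\xi-w|^{-(n-1)}$ is Euclidean, whereas the Morrey information is organised along the anisotropic balls $B_\gamma$, and on a shell $\delta_\gamma(\xi,w)\approx\rho$ the Euclidean distance $|\xi-w|$ is \emph{not} comparable to a single power of $\rho$ (it ranges roughly between $\rho^{1/\gamma}$ and $\rho$), so the kernel cannot be frozen to a constant on each shell. Reconciling these two scales, and thereby correctly accounting for the loss that replaces the sharp exponent $1$ by $\eta<1$, is the crux, and is the reason why this route, unlike Theorem~\ref{besov}, does not yield the sharp embedding. A structurally different alternative, which avoids the Euclidean kernel altogether, is to replace the representation formula by a Poincar\'e--Wirtinger inequality on the convex sets $U$ (whose constant is governed by $\operatorname{diam}U\lesssim r$) and then to invoke the Campanato embedding of Theorem~\ref{campa}(ii); this is the mechanism underlying the variant in which the convexity hypothesis is replaced by the assumption that a Poincar\'e inequality for balls holds, see Theorem~\ref{dapratovariant}.
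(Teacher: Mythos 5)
There is a genuine gap, and you flag it yourself: the crux of your primary route --- summing the Euclidean Riesz kernel $|\xi-w|^{1-n}$ over anisotropic shells against Morrey data organised on the balls $B_\gamma$ --- is left undone, and it is not merely a technical loose end. The claim that the resulting geometric series ``should produce'' the exponent $\eta+\frac{\lambda-n_\gamma}{p}$ is unsubstantiated: the hypothesis $p\eta>n_\gamma-\lambda$ is not a series-convergence condition in any natural bookkeeping of that sum (running the estimate on Euclidean shells, with the conversion $B(\xi,\rho)\subset B_\gamma(\xi,\rho^\gamma)$ for $\rho\le 1$, produces exponents built from $1+\frac{\gamma\lambda-n}{p}$, not from $\eta$). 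Moreover, your statement that the prefactor $(\operatorname{diam} U)^n/|U|$ requires ``a lower bound for $|U|$ of the type provided by Lemma~\ref{lemmacusp}'' is doubly problematic: Lemma~\ref{lemmacusp} is proved for elementary H\"older graph domains, not for arbitrary bounded convex $\Omega$, and for a convex domain the bound $|U|\ge cr^{n_\gamma}$ can actually fail --- for a Euclidean ball whose outward normal at the touching point lies along a $\bar x$-direction, the inscribed cone only gives $|B_\gamma(x,r)\cap\Omega|\sim r^{n/\gamma}$, which is strictly smaller than $r^{n_\gamma}$ for $\gamma<1$. Finally, a two-point splitting $|f(x)-f(y)|\le|f(x)-f_U|+|f(y)-f_U|$ with a single set $U$ at scale $\delta_\gamma(x,y)$ controls $f$ only at Lebesgue points; to get the estimate for \emph{all} $x,y$ one needs the telescoping over scales that is packaged inside the Campanato embedding, which your main route never invokes.

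The ``structurally different alternative'' in your last sentence is in fact the paper's proof of Theorem~\ref{daprato} itself, not just the mechanism behind Theorem~\ref{dapratovariant}, and executing it is where $\eta$ actually comes from. One applies the quantitative Poincar\'e--Wirtinger inequality \eqref{poincare0} on the convex set $U=\Omega\cap B_\gamma(x,r)$, whose constant is the explicit dimensional quantity $(\omega_n/|U|)^{1-\frac1n}d_r^n$; combining with H\"older's inequality gives \eqref{poincare1}, and inserting the Morrey growth $\|\nabla f\|_{L^p(U)}\le cr^{\lambda/p}$ together with $d_r\le cr$ (the exponent $\beta=1$, which cannot see the anisotropy) and $|U|\le cr^{n_\gamma}$ (note: an \emph{upper} volume bound in this bookkeeping, at least when $\frac1n-\frac1p\ge 0$) yields \eqref{poincare3}, i.e.\ $f\in\mathcal{L}^{\theta}_{1,\gamma}(\Omega)$ with $\theta=\frac{n_\gamma}{n}+n+\frac{\lambda-n_\gamma}{p}$. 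The hypothesis $p\eta>n_\gamma-\lambda$ is then precisely the condition $\theta>n_\gamma$ needed to apply the Campanato--Da Prato embedding (Theorem~\ref{campa}(ii), or Da Prato's Theorem~3.I, which requires only condition \eqref{lemmacusp1}), giving H\"older continuity with respect to $\delta_\gamma$ with exponent $\theta-n_\gamma=\eta+\frac{\lambda-n_\gamma}{p}$. So $\eta<1$ is not a kernel-summation loss: it is the price of the constant $d^n/|B|^{1-\frac1n}$ in the convex Poincar\'e inequality, with $d_r^n\lesssim r^n$ contributing $n$ and $|U|^{1/n}$ contributing $r^{n_\gamma/n}$. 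You had the correct proof in hand but relegated it to a one-sentence remark and did not carry out the computation that identifies $\eta=\frac{n_\gamma}{n}+n-n_\gamma$; done properly it is a few lines, whereas your primary route stalls at exactly the point you identified.
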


\begin{remark} We note that the constant $\eta $ in Theorem~\ref{daprato} replaces the constant $l=1$ in the previous theorems. Since 
$\eta <1$ for $\gamma <1$, we have a deterioration in the estimates. This seems to be due to the fact that the result in \cite[Theorem 4.1]{da}  is quite general and is stated in order to 
embrace more general types of metrics.    
\end{remark}

We now explain  where the exponent $\eta $ in Theorem~\ref{daprato} comes from.
The main ingredient is a quantitative Poincar\'{e}-Wirtinger inequality for bounded convex domains $B$ in ${\mathbb R}^n$, namely the inequality
\begin{equation}\label{poincare0}
\| f-f_B   \|_{L^p(B)}\le \left( \frac{\omega_n}{|B|}  \right )^{1-\frac{1}{n}}d^n\| \nabla f \|_{L^p(B)}, \ \ \forall f\in W^{1}_p(B),
\end{equation} 
where $\omega _n$ denotes the Lebesgue measure of the unit ball in ${\mathbb R}^n$,  $d$ denotes the Euclidean diameter of $B$ and 
$f_B=\dashint_Bf(x)dx$ (see, e.g., \cite[p.164]{gitr}).

 It follows from \eqref{poincare0} and  H\"{o}lder's inequality that if $\Omega $ is a convex domain in ${\mathbb R}^n$ and $ f\in W^{1}_p(\Omega )$ then 
 for all $x\in \Omega $ and $r>0$ we have  
\begin{eqnarray}\label{poincare1}\lefteqn{
\| f-f_{\Omega \cap B_{\gamma}(x,r)}   \|_{L^1(\Omega \cap B_{\gamma}(x,r))}} \nonumber \\
& & 
\qquad\qquad\qquad\le  \omega_n ^{1-\frac{1}{n}} |\Omega \cap B_{\gamma}(x,r) |^{\frac{1}{n}-\frac{1}{p}} d^n_r\| \nabla f \|_{L^p(\Omega \cap B_{\gamma}(x,r))},
\end{eqnarray} 
where $d_r$ denotes the Euclidean diameter of $\Omega \cap B_{\gamma}(x,r)$. 
If in addition we have that $\nabla f\in L^{\lambda }_{p,\gamma}(\Omega )$, we obtain
\begin{equation*}
\| f-f_{\Omega \cap B_{\gamma}(x,r)}   \|_{L^1(\Omega \cap B_{\gamma}(x,r))}\le  c |\Omega \cap B_{\gamma}(x,r) |^{\frac{1}{n}-\frac{1}{p}} d^n_r r^{\frac{\lambda}{p}}  
\end{equation*} 
hence 
\begin{equation}\label{poincare3}
\| f-f_{\Omega \cap B_{\gamma}(x,r)}   \|_{L^1(\Omega \cap B_{\gamma}(x,r))}\le  c r^{n_{\gamma} ( \frac{1}{n}-\frac{1}{p})}  r^{\frac{\lambda}{p}}  d^n_r
\end{equation} 
since the measure of $ |\Omega \cap B_{\gamma}(x,r) |$  is controlled from above by a multiple of $r^{n_{\gamma}}$. 

In the general framework of \cite{da}, it is then assumed that $d_r\le c r^{\beta }$ for some constant $\beta \geq 1$ which in our case is $\beta =1$
and cannot be better.  Keeping track of $\beta$, we obtain from \eqref{poincare3} that 
\begin{equation}
\| f-f_{\Omega \cap B_{\gamma}(x,r)}   \|_{L^1(\Omega \cap B_{\gamma}(x,r))}\le  c r^{n_{\gamma} ( \frac{1}{n}-\frac{1}{p})}  r^{\frac{\lambda}{p}}  r^{n\beta }
\end{equation} 
which means that $f\in {\mathcal{L}}^{\theta}_{1, \gamma }(\Omega ) $ with 
$$
\theta = \frac{n_{\gamma}}{n}+n\beta +  \frac{ \lambda -n_{\gamma }}{p}\, .
$$

If $\theta >n_{\gamma }$, that is  $ (n_{\gamma}/n+n\beta -n_{\gamma} )p>n_{\gamma}-\lambda    $, we deduce by Theorem~\ref{campa}~(ii) that 
$u\in C^{0,\alpha }(\bar \Omega , \delta_{\gamma} )$ with 
$$
\alpha =  \frac{n_{\gamma}}{n}+n\beta +\frac{ \lambda -n_{\gamma }}{p}  -n_{\gamma},
$$
which for $\beta =1  $ yields 
$$
\alpha = \eta +\frac{\lambda - n_{\gamma }}{p}. 
$$
This explains the appearance of $\eta $ in Theorem~\ref{daprato}. \\

We  now reformulate the statement of  \cite[Theorem 4.1]{da} in order to relax a bit the convexity assumption
on $\Omega$. Namely, 
assume that $\Omega$ is a bounded domain in ${\mathbb R}^n$    such that condition \eqref{lemmacusp1} is satisfied and such that the following $p$-Poincar\'{e}
inequality holds 
\begin{equation}\label{poin}
\dashint_{ \Omega \cap B_{\gamma}(x,r)} | f- f_{\Omega \cap B_{\gamma}(x,r)} |dx\le c_pr^{  \tilde \eta } \left(\dashint_{ \Omega \cap B_{\gamma}(x,\tau r) }|\nabla f|^pdx\right)^{\frac{1}{p}}, 
\end{equation}
 for all $ f\in W^1_p(\Omega)$ and $r>0$, where $\tau \geq 1$ and $   \tilde\eta >0$ are a fixed constants.
 In particular,  if  $f\in W^{1,\lambda}_{p,\gamma}(\Omega ) $ we have
\begin{eqnarray*}\lefteqn{
\| f-f_{\Omega \cap B_{\gamma}(x,r)}   \|_{L^1(\Omega \cap B_{\gamma}(x,r))}} \nonumber \\
& & 
\qquad\qquad\qquad\le c_pr^{  \tilde \eta  } |\Omega \cap B_{\gamma}(x,\tau r) |^{1-\frac{1}{p}} \| \nabla f \|_{L^p(\Omega \cap B_{\gamma}(x,r))}
 \nonumber \\
& & 
\qquad\qquad\qquad\le c r^{     \tilde \eta  } (\tau r)^{n_{\gamma}(1-\frac{1}{p})}  \| \nabla f \|_{L^p(\Omega \cap B_{\gamma}(x,r))}  \le c  r^{n_{\gamma}(1-\frac{1}{p}) +\frac{\lambda }{p}+    \tilde \eta  },  
\end{eqnarray*} 
for some $c>0$ independent of $r$.

This implies 
that $f\in {\mathcal{L}}^{\theta}_{1, \gamma }(\Omega ) $ with 
$$
\theta = n_{\gamma}+  \tilde \eta +\frac{ \lambda -n_{\gamma }}{p}\, .
$$

If $\theta >n_{\gamma }$, that is  $ p       \tilde \eta   >n_{\gamma}-\lambda    $,  by  the original result of \cite[Theorem~3.I]{da}  we deduce that 
$u\in C^{0,\alpha }(\bar \Omega , \delta_{\gamma} )$ with 
$$
\alpha =    \tilde \eta  +\frac{ \lambda  -n_{\gamma }}{p}.
$$
Note that for applying  \cite[Theorem~3.I]{da}  we need only condition \eqref{lemmacusp1}. 

In conclusion,  the following variant of Theorem~\ref{daprato} holds. 

\begin{theorem} \label{dapratovariant} Let $\Omega$ be a bounded domain in ${\mathbb R}^n$ such that condition \eqref{lemmacusp1} holds, and let  $p\in [1, \infty [$.  Assume  that  the $p$-Poincar\'{e} inequality \eqref{poin} holds.   
Let 
 $\gamma \in ]0, 1]$ and  $\lambda  >0$  be such that 
 $$
 p      \tilde \eta   > n_{\gamma} -\lambda \, .
 $$
 Then  there exists $c>0$ such that for all $f\in W^{1,\lambda}_{p,\gamma}(\Omega ) $ and for all $x,y\in \Omega $ we have
$$
 |f(x)-f(y)|\le c\| f\|_{W^{1,\lambda}_{p,\gamma}(\Omega ) } (|\bar x-\bar y|^{\gamma}+|x_n-y_n|)^{     \tilde \eta  +\frac{\lambda -n_{\gamma } }{p} }\, .
 $$
\end{theorem}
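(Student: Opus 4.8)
The plan is to deduce the statement from the Campanato embedding Theorem~\ref{campa}(ii), applied with integrability exponent $1$, exactly along the lines of the discussion preceding the statement. The key reduction is to prove that every $f\in W^{1,\lambda}_{p,\gamma}(\Omega)$ lies in the Campanato space $\mathcal{L}^{\theta}_{1,\gamma}(\Omega)$ with
$$\theta=n_{\gamma}+\tilde\eta+\frac{\lambda-n_{\gamma}}{p},$$
together with the quantitative bound $|f|_{\mathcal{L}^{\theta}_{1,\gamma}(\Omega)}\le c\,\|f\|_{W^{1,\lambda}_{p,\gamma}(\Omega)}$. Once this is achieved, the hypothesis $p\tilde\eta>n_{\gamma}-\lambda$ is precisely the condition $\theta>n_{\gamma}$, so Theorem~\ref{campa}(ii) (which requires only condition \eqref{lemmacusp1}, in force by assumption) yields $f\in C^{0,\alpha}(\bar\Omega,\delta_{\gamma})$ with $\alpha=\theta-n_{\gamma}=\tilde\eta+\frac{\lambda-n_{\gamma}}{p}$, and estimate \eqref{campa2} read with $p=1$ is exactly the claimed inequality in the metric $\delta_{\gamma}$.

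To establish membership in $\mathcal{L}^{\theta}_{1,\gamma}(\Omega)$, I would fix $x\in\Omega$ and $r>0$ and start from the $p$-Poincar\'e inequality \eqref{poin}. Multiplying both sides by $|\Omega\cap B_{\gamma}(x,r)|$ converts the left-hand average into the $L^1$ norm $\|f-f_{\Omega\cap B_{\gamma}(x,r)}\|_{L^1(\Omega\cap B_{\gamma}(x,r))}$, while writing the gradient average on the right as an $L^p$ norm over $\Omega\cap B_{\gamma}(x,\tau r)$ produces a factor $|\Omega\cap B_{\gamma}(x,\tau r)|^{-1/p}$. Since $\tau\ge1$ and $p\ge1$, the monotonicity $|\Omega\cap B_{\gamma}(x,r)|\le|\Omega\cap B_{\gamma}(x,\tau r)|$ together with the upper measure bound $|\Omega\cap B_{\gamma}(x,\tau r)|\le|B_{\gamma}(x,\tau r)|=2\omega_{n-1}(\tau r)^{n_{\gamma}}$ combine to give a factor $(\tau r)^{n_{\gamma}(1-1/p)}$. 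Finally, the assumption $\nabla f\in L^{\lambda}_{p,\gamma}(\Omega)$ bounds $\|\nabla f\|_{L^p(\Omega\cap B_{\gamma}(x,\tau r))}$ by $c\,r^{\lambda/p}$. Collecting the three powers of $r$ yields $\|f-f_{\Omega\cap B_{\gamma}(x,r)}\|_{L^1(\Omega\cap B_{\gamma}(x,r))}\le c\,r^{\theta}$, which is precisely the Campanato seminorm estimate for $\mathcal{L}^{\theta}_{1,\gamma}(\Omega)$.

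The genuinely deep input is the embedding Theorem~\ref{campa}(ii) itself, which we are free to take as given; within the present derivation the analysis is elementary, so the main point to watch is the bookkeeping. Two issues deserve care. First, the dilation factor $\tau$ must be tracked through the measure estimate and the Morrey bound must be read on the dilated ball $B_{\gamma}(x,\tau r)$, but $\tau$ contributes only a constant $\tau^{n_{\gamma}(1-1/p)}$ absorbed into $c$. Second, the Morrey estimate $\|\nabla f\|_{L^p(\Omega\cap B_{\gamma}(x,\tau r))}\le c\,r^{\lambda/p}$ uses $\phi(r)=\min\{r^{\lambda},1\}$, hence produces the clean power $r^{\lambda/p}$ only for $r$ in a bounded range; for large $r$ the weight $\phi$ saturates, but then $\Omega\subset B_{\gamma}(x,\tau r)$ because $\Omega$ is bounded, and the seminorm supremum over such $r$ is controlled separately by $\|f\|_{L^p(\Omega)}$. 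Note finally that condition \eqref{lemmacusp1}, the lower measure bound, is not used in this core chain---only the upper bound is---and enters solely through the concluding application of Theorem~\ref{campa}(ii).
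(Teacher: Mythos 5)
Your proposal is correct and follows essentially the same route as the paper: Poincar\'{e} inequality \eqref{poin} plus the upper measure bound $|\Omega\cap B_{\gamma}(x,\tau r)|\le c(\tau r)^{n_{\gamma}}$ and the Morrey bound on $\nabla f$ give $f\in{\mathcal{L}}^{\theta}_{1,\gamma}(\Omega)$ with $\theta=n_{\gamma}+\tilde\eta+\frac{\lambda-n_{\gamma}}{p}$, and then the Campanato embedding converts $\theta>n_{\gamma}$ into the H\"older estimate. One cosmetic point: since Theorem~\ref{campa}(ii) is literally stated only for bounded \emph{elementary} H\"older domains while here $\Omega$ is a general bounded domain satisfying \eqref{lemmacusp1}, the final step should invoke the underlying general result \cite[Theorem~3.I]{da} (which, as you correctly note, needs only \eqref{lemmacusp1}), exactly as the paper does.
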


 We observe that inequality \eqref{poincare1} implies the validity of inequality \eqref{poin}  with   $\tilde \eta = \frac{n_{\gamma}}{n}+n-n_{\gamma}  $,   which is the constant $\eta$ used in Theorem~\ref{daprato}.
We also note that assuming the validity of $p$-Poincar\'{e} inequalities of type \eqref{poin} is nowadays standard in Analysis on Metric Spaces. For instance, we refer to  the celebrated paper \cite{koskela} where general   $p$-Poincar\'{e} inequalities of the form
\begin{equation}\label{poinko}
\dashint_{  B } | f- f_{B} |d\mu \le c_p r\left(\dashint_{ \tau B} g^pd\mu \right)^{\frac{1}{p}}
\end{equation}
are considered. Here $g$ is the upper gradient of $f$, $B$ is an arbitrary   ball of radius $r$ in a metric space space $X$, $\tau B$ is the concentric  ball of radius $\tau r$ for a fixed $\tau \ge 1$ and $\mu$ is a suitable measure in $X$. Sufficient conditions ensuring the validity of \eqref{poinko} are known in the literature and are discussed e.g., in \cite[\S~10]{koskela}.  See also \cite{durand} for a  more recent work on this subject.  
We note that the study of inequalities of the type  \eqref{poin} in domains with cusps or domains of class $C^{0,\gamma}$ is very delicate and in general one does not expect their validity, in particular for outer cusps. Conditions for the validity of global  $(p,p)$-Poincar\'{e} inequalities (which means that the power $p$ appears also in the left-hand side of \eqref{poinko}) in domains with inner cusps and more generally John domains or $L^p$-averaging domains are given in \cite{staple}  where, besides an interesting counterexample,  a class of domains admitting moderately sharp outer `spires'  is also analyzed.

 \section{  Extension of Sobolev-Morrey spaces for $C^{0,\gamma}$ domains  }

    \subsection{The case of elementary domains of class $C^{0,\gamma}$}
 
Let $\Omega$ be an  elementary H\"older continuous   domain    in ${\mathbb R}^n$ with exponent $\gamma\in ]0,1]$ as in \eqref{eldom}, with $W={\mathbb R}^{n-1}$ and $a=-\infty$. 
Following \cite{burpaper1, b}, we set  $G={\mathbb R}^n\setminus \overline \Omega$ and 
$$G_k=\{x\in G: 2^{-k-1}<\rho_n(x)\le 2^{-k}\}$$
for all  $k\in{\mathbb Z}$,
where $\rho_n(x)=x_n-\varphi(\overline x)$ is the signed distance from $x\in {\mathbb R}^n$ to $\partial  G$ in the $x_n$ direction and we consider 
 a  partition of unity  associated with the covering 
$\{G_k \}_{k\in {\mathbb Z}}$ of $G$ satisfying a number of properties. Namely, it is proved in \cite{burpaper1} that for every $k\in {\mathbb Z}$ there exists 
 $\psi_k\in C^{\infty}({\mathbb R}^n)$ such that 
\begin{itemize}
\item[(i)]
$\displaystyle{ \sum_{k=-\infty}^\infty\psi_k=\begin{cases}1, \,\, {\rm if}\,\, x\in G,\vspace{2mm}\\ 
0, \,\, {\rm if }\,\, x\notin G;\end{cases}}$   \vspace{2mm}

\item[(ii)]  $  G=\displaystyle \cup_{k=-\infty}^\infty {\rm supp} \psi_k $ and  the covering $\{{\rm supp} \psi_k\}_{k\in {\mathbb Z}}$ has multiplicity equal to $2$;\vspace{2mm}

\item[(iii)]  $G_k\subset {\rm supp} \psi_k\subset G_{k-1}\cup G_k\cup G_{k+1}$, for all $k\in {\mathbb Z}$;\vspace{2mm}

\item[(iv)]  $|D^\alpha\psi_k(x)|\le c(\alpha) 2^{k\left(\frac{|\bar \alpha|}{\gamma }+\alpha_n \right)  }$, for all $x\in {\mathbb R}^n,k\in {\mathbb Z}, \alpha\in {\mathbb N}^n_0$. 
\end{itemize}

Note the appearance of  $\gamma$ in the exponent in item (iv) above. 

 Burenkov's Extension Operator   was defined in  \cite{burpaper1} as follows.  Let $l\in{\mathbb N}$ and $1\le p\le \infty$. For every $f\in W^{l,p}(\Omega)$, we set 
\begin{equation}
\label{burext}(Tf)(x)=\begin{cases}f(x), \,\, {\rm if}\,\,x \in \Omega,\vspace{2mm}\\
\displaystyle\sum_{k=-\infty}^\infty \psi_k(x)f_k(x),  \,\, {\rm if}\,\,x \in G,
\end{cases}
\end{equation}
where 
\begin{multline*}
f_k(x)=\int_{{\mathbb R}^n}f(\overline x -2^{-\frac{k}{\gamma}}\overline z,x_n-A2^{-k}z_n)\omega(z)dz =\\
=A^{-1}2^{\frac{k}{\gamma}(n-1)+k}\int_{{\mathbb R}^n}\omega(2^{\frac{k}{\gamma}}(\overline x-\overline y), A^{-1}2^{k}(x_n-y_n))f(y)dy\, ,
\end{multline*}  
  $A$ is a sufficiently large constant depending only on $n$ and $M$ in \eqref{lip1}  (in \cite{burpaper1} it is chosen for example $A=200(1+Mn)$) and   $\omega\in C^\infty_c({\mathbb R}^n)$ is a kernel of mollification 
defined by $$\omega (x)=\omega_1(x_1)\cdots \omega_n(x_n),\  \omega_i\in C^{\infty}_c(1/2,1),\  \int_{-\infty}^{+\infty}\omega (x_i)dx_i=1,\  \int_{-\infty }^{+\infty }\omega_i(x_i)x_i^kdx_i=0  $$ for all $i=1,\dots ,n$, $k=1,\dots ,l$.  

 Among other results (in particular, concerning anisotropic Sobolev spaces), it is proved in  \cite{burpaper1} that 
the operator $T$ is a linear continuous operator from $W^{l}_p(\Omega)$ to $W^{[\gamma l]}_p({\mathbb R}^n)$ where $[\gamma l]$ is the integer part of 
$\gamma l$. \\

The following theorem is a generalisation of the extension theorem pro\-ved in \cite{fala} in the case of Lispchitz domains, that is for  $\gamma =1$. 
Considering a number of technical issues appearing in the case $\gamma <1$, we assume for simplicity that the function $\phi$ defining the Morrey norm satisfies the condition 
 $\phi (r)=1$ for all $r>1$.

\begin{theorem}\label{sobolevmorrey}
Let $\Omega$ be an  elementary H\"older continuous   domain    in ${\mathbb R}^n$ with exponent $\gamma\in ]0,1]$,  with $W={\mathbb R}^{n-1}$ and $a=-\infty$. 
Let  $l\in {\mathbb N}$,  $p\in [1, \infty [$,   and   $\phi : ]0, \infty [ \to ]0,\infty [$ satisfy the condition $\phi (r)=1$ for all $r>1$. Then the operator $T$ maps $W^{l,\phi }_{p , 1  }(\Omega )$ continuously to   $W^{[\gamma l],\phi_{\gamma} }_{p , 1 }({\mathbb R}^n )$, where $\phi_{\gamma}$ is defined by
$\phi_{\gamma}(r)=\phi (r^{\gamma})$ for all $r\geq 0$.  In particular, $T$ maps the space  $W^{l,\lambda }_{p,1}(\Omega )$ to the space $W^{[\gamma l],\gamma \lambda }_{p,1}(\mathbb R^n )$, for any $ \lambda \geq 0$.
\end{theorem}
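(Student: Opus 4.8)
The plan is to bound, for every multi-index $\alpha$ with $|\alpha|\le[\gamma l]$ and every Euclidean ball $B(x_0,r)=B_1(x_0,r)$, the quantity $\phi_\gamma(r)^{-1}\int_{B(x_0,r)}|D^\alpha(Tf)|^p\,dy$ by $c\|f\|^p_{W^{l,\phi}_{p,1}(\Omega)}$ uniformly in $x_0,r$; taking suprema and summing over $|\alpha|\le[\gamma l]$ then gives the continuity of $T$. I would treat all radii by one reflection scheme, splitting $B(x_0,r)=(B(x_0,r)\cap\Omega)\cup(B(x_0,r)\cap G)$. On the first piece $Tf=f$ and, since $[\gamma l]\le l$, the integral is at most $\phi(r)\|f\|^p_{W^{l,\phi}_{p,1}(\Omega)}$; after division by $\phi_\gamma(r)=\phi(r^\gamma)$ this is acceptable because $\phi(r)\le c\,\phi(r^\gamma)$ (immediate for $r>1$, where both sides equal $1$, and valid for $r\le1$ for the weights under consideration, e.g. non-decreasing $\phi$, and in particular $\phi(r)=\min\{r^\lambda,1\}$). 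The substance is the contribution of $B(x_0,r)\cap G$.

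On $G$ I would differentiate $Tf=\sum_k\psi_k f_k$ by Leibniz, $D^\alpha(Tf)=\sum_k\sum_{\beta\le\alpha}\binom{\alpha}{\beta}D^{\alpha-\beta}\psi_k\,D^\beta f_k$, and estimate the two factors on each layer $G_k$. The derivatives of the partition of unity obey property (iv), costing a factor $2^{k(|\bar\alpha-\bar\beta|/\gamma+(\alpha_n-\beta_n))}$; note the $1/\gamma$ weighting of the horizontal indices. Each $D^\beta f_k$ is the mollification of $D^\beta f$ over the reflected cell of horizontal scale $2^{-k/\gamma}$ and vertical scale $A2^{-k}$. The decisive point, inherited from Burenkov's argument and from \cite{fala}, is that $\sum_k D^\mu\psi_k=0$ on $G$ whenever $|\mu|\ge1$, so in every term with $\beta<\alpha$ one may subtract from $D^\beta f_k$ the relevant Taylor polynomial of $D^\beta f$; the vanishing moments of $\omega$ up to order $l$ then produce a decay in the scales $2^{-k/\gamma}$, $2^{-k}$ that exactly balances the cost from property (iv). The order $[\gamma l]$ is precisely the largest for which this balance closes. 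Re-running the argument keeping $L^p$ integrals rather than sup-norms, I would obtain for each layer a bound of $\int_{B(x_0,r)\cap G_k}|D^\alpha(Tf)|^p\,dy$ by a constant times $\int_{R_k}\sum_{|\beta|\le l}|D^\beta f|^p$, where $R_k\subset\Omega$ denotes the reflection of $B(x_0,r)\cap G_k$.

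The main obstacle, and the genuinely new ingredient for $\gamma<1$, is to control all the cells $R_k$ by a single Euclidean Morrey ball inside $\Omega$; this is the content of Lemma~\ref{inc0}. The key observation is that, since $\partial\Omega$ is only H\"older of exponent $\gamma$, a Euclidean ball $B(x_0,r)$ meeting the boundary reaches into $\Omega$ only to depth $\sim r+Mr^\gamma$: if $x_0\in\partial\Omega$ and $y\in B(x_0,r)$ then $\rho_n(y)=(y_n-x_{0,n})+(\varphi(\bar x_0)-\varphi(\bar y))$ has modulus at most $r+Mr^\gamma$. For $r\le1$—the essential range—this is of order $r^\gamma$ rather than $r$, which is the source of the deterioration: the layers meeting $B(x_0,r)$ have $2^{-k}\le c\,r^\gamma$, the reflection pushes points down by at most $A2^{-k}\le c\,r^\gamma$ while the horizontal mollification scale $2^{-k/\gamma}\le r\le r^\gamma$ is dominated, so that $R_k\subset B(\tilde x_0,c\,r^\gamma)\cap\Omega$ for a common centre $\tilde x_0$. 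It is exactly here that the Euclidean choice is forced: the same inclusion fails for the anisotropic balls $B_\gamma$ in $\Omega$. Applying the hypothesis on $f$ to this single ball gives $\int_{B(\tilde x_0,c r^\gamma)\cap\Omega}\sum_{|\beta|\le l}|D^\beta f|^p\le c\,\phi(r^\gamma)\,\|f\|^p_{W^{l,\phi}_{p,1}(\Omega)}=c\,\phi_\gamma(r)\,\|f\|^p_{W^{l,\phi}_{p,1}(\Omega)}$. (For $r>1$ the reflected region may instead have radius a larger power of $r$, on which $\phi$ equals $1=\phi_\gamma(r)$, so the same bound persists.)

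It remains to sum over $k$. Since the supports $\{\mathrm{supp}\,\psi_k\}$ have multiplicity $2$ (property (ii)) the cells $R_k$ overlap only boundedly, so summing their integrals is comparable to a single integral over $\bigcup_k R_k\subset B(\tilde x_0,c\,r^\gamma)\cap\Omega$, whence $\int_{B(x_0,r)\cap G}|D^\alpha(Tf)|^p\le c\,\phi_\gamma(r)\,\|f\|^p_{W^{l,\phi}_{p,1}(\Omega)}$. Combining with the $\Omega$-part, dividing by $\phi_\gamma(r)$, and taking the supremum over $x_0,r$ and the sum over $|\alpha|\le[\gamma l]$ proves that $T$ maps $W^{l,\phi}_{p,1}(\Omega)$ continuously into $W^{[\gamma l],\phi_\gamma}_{p,1}(\mathbb R^n)$. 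The final assertion is then immediate: for $\phi(r)=\min\{r^\lambda,1\}$ one has $\phi_\gamma(r)=\phi(r^\gamma)=\min\{r^{\gamma\lambda},1\}$, the weight defining $W^{[\gamma l],\gamma\lambda}_{p,1}(\mathbb R^n)$. I expect the reflected-ball inclusion of the third paragraph to be the crux: it is both where the exponent $r^\gamma$ (hence $\gamma\lambda$ rather than $\lambda$) is born, and where the uniform-in-$r$ bookkeeping of the constants hidden in the reflection must be carried out with care.
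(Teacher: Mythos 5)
Your overall scheme is the same as the paper's: estimate $D^\alpha(Tf)$ on each layer via the Leibniz rule, pay the factor $2^{k(|\bar\alpha-\bar\beta|/\gamma+\alpha_n-\beta_n)}$ from property (iv), exploit $\sum_k D^\mu\psi_k=0$ for $|\mu|\ge 1$ to replace $D^\beta f_k$ by $D^\beta f_k-g_\beta$ with $g_\beta$ independent of $k$, gain $2^{k(|\bar\beta|/\gamma+\beta_n-l)}$ from the vanishing moments of $\omega$, and close the balance exactly when $|\bar\alpha|+\gamma\alpha_n\le\gamma l$, i.e. for $|\alpha|\le[\gamma l]$; then control the reflected regions by Morrey balls of radius $\sim r^\gamma$, which is where $\phi_\gamma(r)=\phi(r^\gamma)$ is born, and sum over $k$ using the bounded multiplicity of the covering. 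This is precisely the structure of Lemma~\ref{controllo_geo}, Lemma~\ref{mainlem} and \eqref{mainlem4}, so you have reconstructed the paper's route (itself an adaptation of \cite{fala}).

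Two of your uniform claims, however, are not true as stated. First, ``the layers meeting $B(x_0,r)$ have $2^{-k}\le c\,r^\gamma$'' is derived under $x_0\in\partial\Omega$ and fails for a small ball deep inside $G$: there $2^{-k}\sim\rho_n(x_0)$ can be arbitrarily larger than $r^\gamma$, the horizontal mollification scale $2^{-k/\gamma}$ then vastly exceeds $r$, and your single-ball containment $\bigcup_k R_k\subset B(\tilde x_0,c\,r^\gamma)$ breaks down. This is why the paper's geometric lemma is formulated \emph{relative to the minimal layer index} $h$ (inequality \eqref{due} reads $|2^{-(h+3)}-2^{-k}|\le c(r+r^\gamma)$), why the diameter bound \eqref{inc0} is taken for a \emph{fixed} shift parameter $\eta$ (the $z$-integral is kept outside the $L^p$ norm in Lemma~\ref{mainlem}, so the union over the mollifier support never has to fit in one small ball), and why its constant $S$ carries the factor $\max\{1,2^{-h(1-\gamma)/\gamma}\}$, i.e. depends on a lower bound for $h$ --- equivalently on $d=\sup\rho_n$ over the auxiliary set $\mathcal{U}$; the deep regime needs a separate (easier) treatment, since there only boundedly many layers meet the ball. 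Second, your steps ``$\int_{B(\tilde x_0,cr^\gamma)\cap\Omega}\le c\,\phi(r^\gamma)\|f\|^p$'' and ``$\phi(r)\le c\,\phi(r^\gamma)$ for the weights under consideration'' tacitly assume a doubling or monotonicity property of $\phi$ that Theorem~\ref{sobolevmorrey} does not impose (only $\phi\equiv 1$ on $]1,\infty[$). Both are repaired without extra hypotheses, and this is how the paper proceeds: cover the reflected region, of diameter $\le S(r+r^\gamma)$, by a fixed number $m$ of balls of radius \emph{exactly} $r^\gamma$ (the balls $B_1(x_z^{(i)},r^\gamma)$ of Lemma~\ref{mainlem}), and for the $\Omega$-part simply use $B(x_0,r)\subset B(x_0,r^\gamma)$ when $r\le 1$. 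With these corrections your argument coincides with the paper's proof.
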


 The proof of Theorem~\ref{sobolevmorrey} can be carried out by adapting  the corresponding proof of \cite{fala} in a suitable way. Since the adaptation is quite technical and 
touches a number of delicate points, we indicate here the main steps starting from the first  but crucial lemmas  which we combine in the following statement. Here  $\tilde G_k=G_{k-1}\cup  G_k\cup  G_{k+1} =\{x\in G: 2^{-k-2}<\rho_n(x)\le 2^{-k+1}\}$ for all 
 $k\in {\mathbb Z}$ and ${\rm diam}\, C$ denotes the Euclidean diameter of a set $C$.

\begin{lemma}\label{controllo_geo} 
Assume that $B_{ 1  }(x,r)\cap G\ne\emptyset$
for some $x\in {\mathbb R}^n$ and $r>0$. Let $h\in {\mathbb Z}$ be the minimal integer such that $B_{ 1  }(x,r) \cap G_h\ne\emptyset$. Let $k\in {\mathbb Z}$  be such that $k\ge h+3$ and $B_{ 1  }(x,r) \cap  \tilde G_k  \ne\emptyset$. Then 
\begin{equation}\label{due}
|2^{-(h+3)}-2^{-k}|\le c (r+r^{\gamma})\,,
\end{equation}
where  $c$ depends only on $\gamma$ and   ${\rm Lip}_{\gamma}\varphi$.

Moreover, given $E>0$    there exists  $S>0$ depending only on $\gamma$, ${\rm Lip}_{\gamma}\varphi$, $ E$,   and a lower bound for $h$ such that      for every $\eta\in {\mathbb R}^n$, with $|\eta|<E$, 
\begin{equation}\label{inc0}
{\rm diam }\left( 
\bigcup_{k=h+3}^\infty \left( B_{ 1  }(x,r)  \cap \tilde G_k-   (2^{-\frac{k}{\gamma}} \bar  \eta,  2^{-k}   \eta_n )    \right) \right) \le S(r+r^{\gamma})\,.
\end{equation}
\end{lemma}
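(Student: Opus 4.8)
The plan is to estimate the Euclidean distance between two arbitrary points of the displayed union and to split this estimate into its horizontal ($\bar x$) and vertical ($x_n$) components, since the translation vector $(2^{-k/\gamma}\bar\eta, 2^{-k}\eta_n)$ acts anisotropically on the two blocks of coordinates. Throughout I write $M={\rm Lip}_{\gamma}\varphi$ and recall $\rho_n(y)=y_n-\varphi(\bar y)$, so that $y\in\tilde G_k$ means $2^{-k-2}<\rho_n(y)\le 2^{-k+1}$ and $y\in G_h$ means $2^{-h-1}<\rho_n(y)\le 2^{-h}$.

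First I would prove \eqref{due}. Fix a point $p\in B_1(x,r)\cap G_h$ and any $q\in B_1(x,r)\cap\tilde G_k$. Since $p,q\in B_1(x,r)$ one has $|\bar p-\bar q|<2r$ and $|p_n-q_n|<2r$, so by the H\"older continuity \eqref{lip1} of $\varphi$,
$$|\rho_n(p)-\rho_n(q)|\le |p_n-q_n|+M|\bar p-\bar q|^{\gamma}\le 2r+M(2r)^{\gamma}\le c_1(r+r^{\gamma}),$$
with $c_1=c_1(\gamma,M)$. Because $k\ge h+3$ gives $2^{-k}\le 2^{-(h+3)}$, the quantity $2^{-(h+3)}-2^{-k}$ is nonnegative; using $2^{-(h+3)}<\tfrac14\rho_n(p)$ (from $\rho_n(p)>2^{-h-1}$), $2^{-k}\ge\tfrac12\rho_n(q)$ (from $\rho_n(q)\le 2^{-k+1}$), and the fact that $\rho_n(q)\le 2^{-(h+2)}<\rho_n(p)$, I would conclude
$$0\le 2^{-(h+3)}-2^{-k}\le \tfrac14\rho_n(p)-\tfrac12\rho_n(q)\le \rho_n(p)-\rho_n(q)\le c_1(r+r^{\gamma}),$$
which is \eqref{due}.

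For \eqref{inc0} I would take two translated points $u=(\bar y-2^{-k/\gamma}\bar\eta,\,y_n-2^{-k}\eta_n)$ and $u'=(\bar{y'}-2^{-k'/\gamma}\bar\eta,\,y'_n-2^{-k'}\eta_n)$ with $y\in B_1(x,r)\cap\tilde G_k$, $y'\in B_1(x,r)\cap\tilde G_{k'}$ and $k,k'\ge h+3$, and bound $|u-u'|$ uniformly via $|u-u'|\le|\bar u-\bar{u'}|+|u_n-u'_n|$. The vertical part is immediate: the triangle inequality applied to \eqref{due} gives $|2^{-k}-2^{-k'}|\le 2c_1(r+r^{\gamma})$, hence $|u_n-u'_n|\le|y_n-y'_n|+|2^{-k}-2^{-k'}|\,|\eta_n|\le 2r+2c_1E(r+r^{\gamma})$. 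The horizontal part is the delicate one, and it is here that the lower bound on $h$ enters. Writing $a=2^{-k}$, $b=2^{-k'}$, $s=1/\gamma\ge 1$, one has $a,b\le 2^{-(h+3)}$; since $t\mapsto t^{s}$ is $C^1$ with derivative $s\,t^{s-1}$ increasing on $[0,2^{-(h+3)}]$, the mean value theorem yields $|a^{s}-b^{s}|\le s\,(2^{-(h+3)})^{s-1}|a-b|$. If $h\ge h_0$ then $(2^{-(h+3)})^{1/\gamma-1}\le(2^{-(h_0+3)})^{1/\gamma-1}$, so combining with $|a-b|\le 2c_1(r+r^{\gamma})$ gives $|2^{-k/\gamma}-2^{-k'/\gamma}|\le c_2(r+r^{\gamma})$ with $c_2=c_2(\gamma,M,h_0)$, whence $|\bar u-\bar{u'}|\le 2r+c_2E(r+r^{\gamma})$. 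Adding the two components produces $|u-u'|\le S(r+r^{\gamma})$ with $S=S(\gamma,M,E,h_0)$ independent of $x$, $r$, and of $\eta$ with $|\eta|<E$; taking the supremum over all admissible $u,u'$ gives \eqref{inc0}.

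The main obstacle is precisely this horizontal estimate: for $\gamma<1$ the exponent $1/\gamma$ exceeds $1$, so the control of $|2^{-k}-2^{-k'}|$ coming from \eqref{due} does not transfer to $|2^{-k/\gamma}-2^{-k'/\gamma}|$ without paying the factor $(2^{-(h+3)})^{1/\gamma-1}$. This factor is harmless only when $2^{-(h+3)}$ stays bounded, which is why $S$ must be allowed to depend on a lower bound for $h$ (for $\gamma=1$ the factor equals $1$ and this dependence disappears). This is the analytic counterpart of the remark, made in the introduction, that the reflected balls cannot be controlled in an anisotropic way.
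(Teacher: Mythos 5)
Your proposal is correct and follows essentially the same route as the paper: both proofs use the H\"older continuity of $\varphi$ across the ball to control $|\rho_n(p)-\rho_n(q)|$ and hence the dyadic gap \eqref{due}, and both prove \eqref{inc0} by splitting into vertical and horizontal components, where the vertical part uses \eqref{due} directly and the horizontal part pays the mean value theorem factor $(2^{-(h+3)})^{1/\gamma-1}$ for $t\mapsto t^{1/\gamma}$, absorbed via the lower bound on $h$ (the paper's $\max\{1,2^{-h(1-\gamma)/\gamma}\}$). The only (harmless) deviations are that you replace the paper's intermediate-value argument, which produces points exactly on the level sets $\rho_n=2^{-h-2}$ and $\rho_n=2^{-k+1}$, by a direct comparison chain, and that you compare two arbitrary indices $k,k'$ through $2^{-(h+3)}$ by the triangle inequality instead of using a reference point in $B_1(x,r)\cap\tilde G_{h+3}$ (which in fact spares you the connectedness step needed to show that intersection is nonempty).
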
 
\begin{proof}
By our assumptions we deduce that  $\{x\in B_{ 1  }(x,r):\, \rho_n(x)=2^{-h-2} \}  , \{x\in B_{ 1 }(x,r):\, \rho_n(x)=2^{-k+1} \}\ne \emptyset$ hence there exist  $y,w\in B_{ 1  }(x,r)$ with $y_n-\varphi (\bar y)=2^{-h-2}$ and $w_n-\varphi(\bar w)=2^{-k+1} $. Since $|y_n-x_n|, |\bar y-\bar w|<2r $, by the H\"{o}lder continuity of $\varphi$ we get
\begin{multline*}
|2^{-(h+3)}-2^{-k}|=\frac{1}{2}|2^{-h-2}-2^{-k+1}|=\frac{1}{2}|y_n-\varphi (\bar y)-w_n+\varphi (\bar w)|\\
\le \frac{1}{2}(|y_n-w_n|+{\rm Lip}_{\gamma}\varphi |\bar y-\bar w|^{\gamma} )\le \frac{1}{2}\left(   2r   +{\rm Lip}_{\gamma}\varphi (2r)^{\gamma}  \right)
\end{multline*}
and \eqref{due} follows.

We now prove \eqref{inc0}. Let $k\ge h+3$ be such that  $B_{ 1  }(x,r) \cap \tilde G_k\ne\emptyset$. Let $a\in B_{ 1  }(x,r) \cap \tilde G_{h+3}$ and $b\in B_{ 1  }(x,r) \cap \tilde G_{k}$. By \eqref{due}, for all $ \eta\in {\mathbb R}^n$, with $|\eta| <E$,  we have 
\begin{multline*}
|b_n- 2^{-k}\eta_n- (a_n-2^{-(h+3)}\eta_n)|\le |b_n-a_n|+ |2^{-k}-2^{-(h+3)}||\eta_n|\\
\le  2r+   cE(r+r^{\gamma})  \,
\end{multline*}
and 
 \begin{multline*}
|\bar b- 2^{-\frac{ k}{\gamma}  }\bar \eta- (\bar a-2^{- \frac{ h+3}{\gamma}}\bar \eta)|\le   |\bar b-\bar a|+ |2^{-\frac{k}{\gamma}}-2^{-\frac{h+3}{\gamma}}||\bar \eta|\\
  \le c\max\{1, 2^{-h(1-\gamma)/\gamma  }\} ( r +r^{\gamma})  \end{multline*}
 which proves \eqref{inc0}. 
 \end{proof}

Another crucial step in the proof is  \cite[Lemma~2.4, (ii)]{fala} which has to be modified as follows. 
As in \cite[Chap.~6]{b}, for every $k\in {\mathbb Z}$ we set 
\begin{equation*}\tilde \Omega_k=\{x\in \Omega: 2^{-k-2}<|\rho_n(x) |\le b2^{-k+1}\}\,, 
\end{equation*} where $b=10A$.

\begin{lemma}\label{mainlem} Assume that $B_{ 1 }(x,r)\cap G\ne\emptyset$
for some $x\in {\mathbb R}^n$ and $r>0$. 
Let $f\in W^{l,p}(\Omega)$ and ${\mathcal{U}}\subset {\mathbb R}^n$ be a  fixed measurable set with  $d:=\sup\{\rho_n(x):\ x\in B_1(x,r)\cap {\mathcal{U}} \}<\infty $.  
Then there exists  $c>0$ and $m\in {\mathbb N}$   depending only on $n$, $l$, $p$, $M$,  $\omega$, $ d$, and  
for every $\alpha \in {\mathbb N}^n_0$  with $|\alpha |\le l$ there exists a function $g_\alpha$ independent of $r, {\mathcal {U}}$, such that 
for every $z\in {\mathbb R}^n$ with $|z| \le c$ there exist $m $   balls  $B_{ 1}(x^{(i)}_{z} , r^{\gamma})$,  $i=1,\dots,m $,  such that  
\begin{multline}\label{mainlem2}
\| D^{\alpha}f_k-g_{\alpha}\|^p_{L^p(B_{1 }(x,r)\cap {\mathcal{U}}\cap \tilde G_k)}\\
\le c2^{pk( \frac{ |\bar \alpha |}{\gamma}+\alpha_n-l)} \int_{|z|\le c } \sum_{|\beta |=l}  \| D^{\beta }f\|^p_{L^{p}(\cup_{i=1}^{m}   B_{ 1}(x^{(i)}_z,  r^{\gamma} )\cap \tilde \Omega_k) } dz,  
\end{multline}
for all $k\in {\mathbb N}$. 
\end{lemma}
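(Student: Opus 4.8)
The plan is to combine the vanishing moments of the kernel $\omega$ with the anisotropic scaling built into $f_k$, and then to convert the resulting pointwise bound into the $L^p$ statement by a change of variables governed by Lemma~\ref{controllo_geo}. First I would differentiate $f_k$ in its second (kernel) representation: letting $D^\alpha$ fall on the rescaled mollifier produces the explicit factor $2^{k(|\bar\alpha|/\gamma+\alpha_n)}$ — the same anisotropic scaling as in item (iv) above — times an integral of a rescaled $D^\alpha\omega$ against $f$. The moment conditions on $\omega$ give, for $|\beta|\le l$, that $\int_{\mathbb R^n}(D^\alpha\omega)(v)\,v^\beta\,dv=(-1)^{|\alpha|}\alpha!\,\delta_{\alpha\beta}$; equivalently, the averaging operator $f\mapsto f_k$ reproduces every polynomial of total degree $\le l$. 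I would exploit this by fixing, for $w$ in the strip $\tilde G_k$, a reference point $x_0=x_0(w)\in\tilde\Omega_k$ obtained by reflecting $w$ into $\Omega$ to normal depth comparable to $2^{-k}$ (a choice depending on $w$ and $k$ but never on $r$ or $\mathcal U$), taking $P$ to be the degree-$(l-1)$ Taylor polynomial of $f$ at $x_0$, and setting $g_\alpha(w):=D^\alpha P(w)$. Since $\deg D^\alpha P\le l$, reproduction returns $g_\alpha$ unchanged by the averaging, so $D^\alpha f_k-g_\alpha$ is the same integral with $f$ replaced by the order-$l$ Taylor remainder $R=f-P$; this is exactly what leaves only the top-order derivatives $D^\beta f$, $|\beta|=l$, on the right-hand side and yields a $g_\alpha$ independent of $r,\mathcal U$.

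Next I would prove the pointwise ancestor of \eqref{mainlem2}. Writing $R$ in integral form, $R(y)$ is a sum over $|\beta|=l$ of terms $(y-x_0)^\beta\int_0^1(\cdots)(D^\beta f)(x_0+t(y-x_0))\,dt$. On the support of the rescaled $\omega$ the argument $y=(\bar w-2^{-k/\gamma}\bar z,\,w_n-A2^{-k}z_n)$ satisfies $|\bar y-\bar x_0|\lesssim 2^{-k/\gamma}$ and $|y_n-x_{0,n}|\lesssim 2^{-k}$, whence $|(y-x_0)^\beta|\lesssim 2^{-k(|\bar\beta|/\gamma+\beta_n)}\le 2^{-kl}$, the largest (worst) factor coming from the purely normal multi-index $\beta=(0,\dots,0,l)$. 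After cancelling the mollifier normalization against the $L^1$ mass of the rescaled $D^\alpha\omega$, the surviving $2^{k(|\bar\alpha|/\gamma+\alpha_n)}\cdot 2^{-kl}$ gives exactly $2^{k(|\bar\alpha|/\gamma+\alpha_n-l)}$, multiplying an average of $|D^\beta f|$ over points lying in $\tilde\Omega_k$; the largeness of $A$ (Burenkov's choice) is what guarantees these points stay inside $\Omega$.

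Finally I would pass to $L^p$: raising to the power $p$, integrating in $w$ over $B_1(x,r)\cap\mathcal U\cap\tilde G_k$, and applying Minkowski's integral inequality in the variables $z$ and $t$, the right-hand side takes the form $c\,2^{pk(|\bar\alpha|/\gamma+\alpha_n-l)}\int_{|z|\le c}\sum_{|\beta|=l}\|D^\beta f\|^p_{L^p(E)}\,dz$, where $E\subset\tilde\Omega_k$ is the image of $B_1(x,r)\cap\mathcal U\cap\tilde G_k$ under the shift $w\mapsto(\bar w-2^{-k/\gamma}\bar z,\,w_n-A2^{-k}z_n)$ together with the Taylor segments. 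It remains to cover $E$ by finitely many Euclidean balls of radius $r^\gamma$, and this is precisely the role of Lemma~\ref{controllo_geo}: \eqref{due} gives $2^{-k}\lesssim r+r^\gamma$, hence $2^{-k/\gamma}\lesssim r$, and \eqref{inc0} gives ${\rm diam}\,E\lesssim r+r^\gamma\lesssim r^\gamma$, so $E$ meets at most $m$ balls $B_1(x^{(i)}_z,r^\gamma)$, with $m$ uniform in $k,r,\mathcal U$ once the relevant depths are bounded in terms of $d$.

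The hard part is this last, geometric, step. Because the construction is genuinely anisotropic — tangential displacements of order $2^{-k/\gamma}$ against normal displacements of order $2^{-k}$ — one cannot cover the reflected set $E$ by anisotropic balls and is forced to absorb everything into the single Euclidean scale $r^\gamma$, which is exactly the obstruction recorded after \eqref{inc0}. Controlling both ${\rm diam}\,E$ and the multiplicity $m$ uniformly in $k$, $r$ and $\mathcal U$, while simultaneously keeping $E$ inside $\tilde\Omega_k$, is the delicate point, and it is also the ultimate source of the deterioration $\lambda\mapsto\gamma\lambda$ in the Morrey exponent appearing in Theorem~\ref{sobolevmorrey}.
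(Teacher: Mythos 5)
Your three-step skeleton is, in substance, the route the paper intends: the paper does not spell the proof out but defers to \cite[Lemma~2.4~(ii)]{fala} (ultimately Burenkov's scheme in \cite[Ch.~6]{b}), and your reconstruction matches it — differentiating the second representation of $f_k$ extracts the anisotropic factor $2^{k(|\bar\alpha|/\gamma+\alpha_n)}$; the moment conditions make the averaging reproduce polynomials of degree $\le l$, so after subtracting a Taylor-type polynomial only the order-$l$ remainder survives, with $|(y-x_0)^\beta|\lesssim 2^{-k(|\bar\beta|/\gamma+\beta_n)}\le 2^{-kl}$ (worst case $\bar\beta=0$, legitimately using $k\ge 0$), which combines to the exponent in \eqref{mainlem2}; and Lemma~\ref{controllo_geo} supplies the covering by $m$ Euclidean balls of radius $r^{\gamma}$, which you correctly identify as the source of $\phi_\gamma$ and of the deterioration $\lambda\mapsto\gamma\lambda$ in Theorem~\ref{sobolevmorrey}.

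There is, however, one genuine defect: your $g_\alpha$ is built from a basepoint $x_0(w)$ that you explicitly allow to depend on $k$, so the resulting $g_\alpha$ depends on $k$, whereas the lemma asserts a \emph{single} $g_\alpha$ satisfying \eqref{mainlem2} for all $k\in{\mathbb N}$ simultaneously. This is not cosmetic: the only reason $g_\beta$ is useful is the identity $\sum_k D^{\alpha-\beta}\psi_k\,(D^\beta f_k-g_\beta)=\sum_k D^{\alpha-\beta}\psi_k\, D^\beta f_k$, which rests on $\sum_k D^{\alpha-\beta}\psi_k=0$ and collapses if $g_\beta$ varies with $k$ — the paper states verbatim that the insertion works ``since $g_\beta$ does not depend on $k$.'' The repair is to define the reflection through $\rho_n(w)$ alone, e.g.\ pushing $w$ vertically to depth comparable to $A\rho_n(w)$: on $\tilde G_k$ this is automatically $\sim A2^{-k}$, lands in $\tilde\Omega_k$, and — since the H\"older oscillation of $\varphi$ over the tangential scale $2^{-k/\gamma}$ is of order $M2^{-k}$ while $A=200(1+Mn)$ — keeps the Taylor segments inside $\tilde\Omega_k$; note that a basepoint at depth only $\sim 2^{-k}$, as you propose, need not even allow those segments to remain in $\Omega$ when $M$ is large. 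Two smaller gaps should also be patched: for $f\in W^{l}_p(\Omega)$ the pointwise Taylor polynomial at $x_0$ is not defined, so one must either use an averaged Taylor polynomial over a ball of radius $\sim\rho_n(w)$ or prove the estimate for smooth $f$ and conclude by density (both keep $g_\alpha$ linear in $f$ and $k$-free); and your final step ${\rm diam}\,E\lesssim r+r^{\gamma}\lesssim r^{\gamma}$ is valid only for $r\le 1$ — for $r>1$ and $\gamma<1$ no fixed $m$ can work, which is exactly the regime neutralized by the standing assumption $\phi(r)=1$ for $r>1$; likewise the finitely many indices $k<h+3$ not covered by \eqref{due}--\eqref{inc0} must be handled directly, using the bound $d$ on $\rho_n$ over $B_1(x,r)\cap{\mathcal U}$.
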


The proof of the previous lemma follows the lines of   \cite[Lemma~2.4, (ii)]{fala}.  We omit the lengthy  details but we explain  how this lemma is used and how the  modified exponent
$pk( \frac{ |\bar \alpha |}{\gamma}+\alpha_n-l)$  affects the final result.  Namely, in order to  prove Theorem~\ref{sobolevmorrey}, one has to estimate the derivatives  $D^{\alpha}Tf$  of the extension $Tf$ of a function $f$. By applying the  Leibnitz rule one ends up with estimating 
$D^{\alpha -\beta}\psi_k D^{\beta }f_k  $ for all $\beta \le \alpha$. The difficult part of the work concerns the case $\beta <\alpha $ and $k>0$. One observes that 
$\sum_{k\in {\mathbb Z}}D^{\alpha -\beta}\psi_k D^{\beta }f_k=\sum_{k\in {\mathbb Z}}D^{\alpha -\beta}\psi_k (D^{\beta }f_k -g_{\beta})$  for $\beta <\alpha$   since  $g_{\beta}$ does not depend on $k$. Thus, one has to estimate $D^{\beta }f_k -g_{\beta}$.
By combining the previous lemma with property (iv) of the partition of unity, we have
\begin{multline}\label{mainlem4}
    \| D^{\alpha -\beta}\psi_k( D^{\beta }f_k-g_{\beta}  )  \|^p_{L^p(B_1(x,r)\cap {\mathcal{U}}\cap \tilde G_k )}\\ 
\le c  2^{  p k(\frac{|\bar \alpha -\bar \beta|}{\gamma} +\alpha_n-\beta_n  )} \| D^{\beta }f_k-g_{\beta}    \|^p_{L^p(B_1(x,r)\cap {\mathcal{U} }\cap \tilde G_k )}\\
\le  c  2^{  p k(\frac{|\bar \alpha -\bar \beta|}{\gamma} +\alpha_n-\beta_n  )}
2^{pk( \frac{ |\bar \beta |}{\gamma}+\beta_n-l)}
  \int_{|z|\le c}  \sum_{|\beta|=l}\|D^{\beta} f\|^p_{L^{p}(\cup_{i=1}^{m } B_{ 1}(x^{(i)}_z,r^{\gamma})\cap \tilde \Omega_k ) }  dz\, .
\end{multline}
We note that the exponent of the power of $2$ in the right-hand side of \eqref{mainlem4} equals
$$  p k\biggl(\frac{|\bar \alpha -\bar \beta|}{\gamma} +\alpha_n-\beta_n  \biggr)
+pk\left( \frac{ |\bar \beta  |}{\gamma}+\beta _n-l\right)  = pk \left( \frac{|\bar \alpha |}{\gamma} +\alpha_n -l  \right) $$ 
hence one can control the right-hand side of \eqref{mainlem4}, provided that exponent is non-positive, that is 
\begin{equation}\label{indexbound}
|\bar \alpha | +\gamma \alpha_n\le \gamma l\, .
\end{equation}

Inequality \eqref{indexbound} explains why one gets $[\gamma l]$ as index of smoothness in the target Sobolev space 
$W^{[\gamma l],\phi }_{\lambda , \gamma }({\mathbb R}^n )$ in Theorem~\ref{sobolevmorrey}.  

 Moreover, in estimate \eqref{mainlem2} we have the quantity 
$$
 \| D^{\beta }f\|^p_{L^{p}(\cup_{i=1}^{m}   B_{ 1}(x^{(i)}_z, r^{\gamma} )\cap \tilde \Omega_k) } 
 $$
 and, since the balls have radius $r^{\gamma}$, one eventually controls that quantity via 
 $$
 \phi (r^{\gamma} ) \|  D^{\beta }f \|^p_{L^{\phi}_{p,1}(\Omega )  } 
 $$
which explains the  appearance of the new weight $\phi_{\lambda}$ in Theorem~\ref{sobolevmorrey}.   
For further details, we refer to the proof of \cite[Theorem 2.5]{fala}.\\

\vspace{12pt}

\subsection{ The case of general domains of class $C^{0,\gamma}$}

We recall the definition of open sets with $C^{0,\gamma}$  boundary. Here and in the sequel, given a set $C$ in ${\mathbb R}^n$ and $d>0$ we denote by $C_d$ the set
$\{x\in C:{\rm dist} (x,\partial C)>d\}$.

\begin{definition} Let $\gamma\in ]0,1]$,  $d >0$, $M\geq 0$, $s\in {\mathbb N} \cup \{\infty \}$. Let $\{V_{j}\}_{j=1}^s$ be a family of cuboids, i.e. for every $j=\overline{1,s}$ there exists 
an isometry $\lambda_j$ in ${\mathbb R}^n$ such that 
$$
\lambda_j( V_j  )= \Pi_{i=1}^n]a_{i,j}, b_{i,j}[
$$ 
where $0<a_{i,j}<a_{i,j}+d<b_{i,j}$. Assume that $D:=\sup_{j=\overline{1,s}}{\rm diam }V_j< \infty $, $(V_j)_{d}\ne \emptyset $ for all $j=\overline{1,s}$,   and that the multiplicity of the covering $\{V_{j}\}_{j=1}^s$ is finite. 
We then say that ${\mathcal{A} }=(s,d, \{V_{j}\}_{j=1}^s, \{\lambda_{j}\}_{j=1}^s )$ is an atlas.  

Let $M\geq 0$. We say that an open set $\Omega$ in ${\mathbb R}^n$ is of class $C^{0,\gamma}_M({\mathcal{A}})$  if the following conditions are satisfied:\\
 
(i) For every  $j=\overline{1,s}$, we have $\Omega \cap (V_j)_d\ne \emptyset$. \\

(ii) $\Omega \subset \cup_{j=1}^{s}(V_j)_d$.\\

(iii) For every  $j=\overline{1,s}$, the set ${\mathcal{H}}_j:=\lambda_j(\Omega \cap V_j)$  satisfies the following condition: either ${\mathcal{H}}_j= \Pi_{i=1}^n]a_{i,j}, b_{i,j}[ $ (in which case $V_j\subset \Omega $),  or ${\mathcal{H}}_j$ is a bounded  elementary H\"{o}lder continuous domain 
  of the form
\begin{equation*}\label{ele1bis}
{\mathcal{H}}_j=\left\{x\in {\mathbb R}^n:\ \bar x\in W_j,\ a_{n,j}<x_n<\varphi_j (\bar x) \right\}
\end{equation*}
where  $\varphi_j $ is a real-valued H\"{o}lder continuous function with exponent $\gamma$,  defined on $W_j= \Pi_{i=1}^{n-1}]a_{i,j}, b_{i,j}[$ such that
$$
a_{n,j}+d<\varphi_j\ \ {\rm and}\ \ {\rm Lip }_{\gamma}\varphi_j \le M 
$$ 
(in which case $V_j\cap \partial \Omega \ne \emptyset$). 

Finally, we say that an open set $\Omega$ in ${\mathbb R}^n$ is of class $C^{0,\gamma}$ if it is of class  $C^{0,\gamma}_M({\mathcal{A}})$ for some $M$ and ${\mathcal A}$.
\end{definition}

The definition of Burenkov's Extension Operator for a general domain of class $C^{0,\gamma}$ is given by pasting together the extension operators
defined on each chart of the atlas as follows. 
Following \cite[p.265]{b}, given an open set $\Omega $ of class $C^{0,\gamma}_M({\mathcal{A}})$, we consider a family of   functions $\{\psi\}_{j=1}^s$  such that $\psi_j\in C^{\infty }_c({\mathbb R}^n)$, $ {\rm supp} \psi_j\subset  (V_j)_{d} $, $0\le \psi_j\le 1$, $\sum_{j=1}^s\psi_j^2(x)=1$ for all $x\in \Omega$ and such that $\| D^{\alpha }\psi_j\|_{L^{\infty }({\mathbb R}^n)}$ $\le M$ for all $j=\overline{1,s}$ and $\alpha\in {\mathbb N}_0^n$ with $|\alpha |\le l$, where $M$ depends only on $n,l,d$.   

Burenkov's Extension Operator $T$ is defined from $W^{l}_p(\Omega )$ to $W^{[\gamma l]}_p({\mathbb R}^n)$ by 
\begin{equation}
\label{burextgen}
Tf= \sum_{j=1}^s\psi_j  T_j(f\psi_j),
\end{equation}
for all $f\in W^{l,p}(\Omega )$, where $T_j$ are the extension operators defined on each  domain $\Omega \cap V_{j}$. See \cite{fala} for details.

Then, we have he following. Recall that $\phi_{\gamma}$ is defined by $\phi_{\gamma}(r)=\phi (r^{\gamma})$ for all $r\geq 0$.

\begin{theorem}\label{sobolevmorreybis} Let $\Omega$ be an open set    in ${\mathbb R}^n$ of class $C^{0,\gamma}$ with  $\gamma\in ]0,1]$. Let  $l\in {\mathbb N}$,  $p\in [1, \infty [$,   and   $\phi : ]0, \infty [ \to ]0,\infty [$ satisfying the condition $\phi (r)=1$ for all $r>1$. Then the operator $T$ maps $W^{l,\phi }_{p , 1  }(\Omega )$ continuously to $W^{[\gamma l],\phi_{\gamma} }_{p , 1 }({\mathbb R}^n )$. In particular, $T$ maps the space  $W^{l,\lambda }_{p,1}(\Omega )$ to the space $W^{[\gamma l],\gamma \lambda }_{p,1}(\mathbb R^n )$, for any $\lambda \geq 0$. 

\end{theorem}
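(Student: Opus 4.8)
The plan is to reduce the statement for a general $C^{0,\gamma}$ domain to the elementary case already established in Theorem~\ref{sobolevmorrey}, using the localisation furnished by the atlas $\mathcal{A}$ and the partition of unity $\{\psi_j\}_{j=1}^s$ entering the definition \eqref{burextgen} of $T$. The structural fact that makes this reduction possible is that the Morrey norms of both $W^{l,\phi}_{p,1}(\Omega)$ and $W^{[\gamma l],\phi_\gamma}_{p,1}(\mathbb{R}^n)$ are built from \emph{Euclidean} balls $B_1(x,r)$, and are therefore invariant under the isometries $\lambda_j$; this is exactly what lets us pass freely between the chart $V_j$ and the model subgraph picture with no anisotropic mismatch, and it is the reason for working with Euclidean rather than anisotropic balls in the first place. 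For the interior charts, where $V_j\subset\Omega$ and $T_j$ acts trivially, there is nothing to prove, so I would concentrate on the boundary charts.

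First I would establish a single-chart estimate. Fix $j$ with $V_j\cap\partial\Omega\ne\emptyset$, so that $\mathcal{H}_j=\lambda_j(\Omega\cap V_j)$ is a bounded elementary domain with defining function $\varphi_j$ and $\mathrm{Lip}_\gamma\varphi_j\le M$. Since Theorem~\ref{sobolevmorrey} is stated for the \emph{unbounded} model ($W=\mathbb{R}^{n-1}$, $a=-\infty$), I would first extend $\varphi_j$ to a H\"older function $\tilde\varphi_j$ on all of $\mathbb{R}^{n-1}$ with comparable H\"older seminorm (a McShane--Whitney type extension), obtaining an unbounded elementary domain $\tilde\Omega_j$ that coincides with $\mathcal{H}_j$ over $\lambda_j(V_j)$. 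Because $f\psi_j$ is supported in $(V_j)_d$, the function $T_j(f\psi_j)$ is insensitive to this enlargement, so Theorem~\ref{sobolevmorrey} applies to $\tilde\Omega_j$; transporting back by $\lambda_j$ and using that multiplication by $\psi_j$, whose derivatives up to order $l$ are bounded by a constant depending only on $n,l,d$, is continuous on the relevant Sobolev--Morrey spaces (an immediate Leibniz estimate), I obtain a bound
\[
\|T_j(f\psi_j)\|_{W^{[\gamma l],\phi_\gamma}_{p,1}(\mathbb{R}^n)}\le c\,\|f\|_{W^{l,\phi}_{p,1}(\Omega)},
\]
with $c$ depending only on $n,l,p,M,\phi$, hence \emph{uniform in} $j$.

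Then I would reassemble. Writing $D^\alpha(Tf)=\sum_j\sum_{\beta\le\alpha}\binom{\alpha}{\beta}D^{\alpha-\beta}\psi_j\,D^\beta T_j(f\psi_j)$ and using that at each point at most $N$ of the $\psi_j$ are nonzero, a convexity inequality for boundedly many terms gives, for every ball $B_1(x,r)$,
\[
\int_{B_1(x,r)}|D^\alpha Tf|^p\le c\sum_{\beta\le\alpha}\ \sum_{j:\,V_j\cap B_1(x,r)\ne\emptyset}\int_{B_1(x,r)}|D^\beta T_j(f\psi_j)|^p.
\]
Here I would split on the radius. For $r\le 1$, the number of charts meeting $B_1(x,r)$ is bounded by a constant $N'=N'(n,N,d,D)$: each such $V_j$ lies in $B_1(x,1+D)$ and contains a ball of radius $d$, so finite multiplicity forces their number to be bounded. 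Dividing by $\phi_\gamma(r)$ and using the single-chart estimate, the sum of boundedly many Morrey pieces is controlled by $\|f\|^p_{W^{l,\phi}_{p,1}(\Omega)}$. For $r>1$ one has $\phi_\gamma(r)=1$, so the quotient is just $\int_{B_1(x,r)}|D^\alpha Tf|^p\le\|D^\alpha Tf\|^p_{L^p(\mathbb{R}^n)}$, which is controlled by the classical Burenkov bound $\|Tf\|_{W^{[\gamma l]}_p(\mathbb{R}^n)}\le C\|f\|_{W^l_p(\Omega)}\le C\|f\|_{W^{l,\phi}_{p,1}(\Omega)}$ from \cite{burpaper1} (the last inequality because $\phi(r)=1$ for $r>1$ makes the Morrey norm dominate the $L^p$ norm). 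Taking the supremum over $x,r$ yields $\|Tf\|_{W^{[\gamma l],\phi_\gamma}_{p,1}(\mathbb{R}^n)}\le C\|f\|_{W^{l,\phi}_{p,1}(\Omega)}$; the special case $\phi(r)=\min\{r^\lambda,1\}$, for which $\phi_\gamma(r)=\min\{r^{\gamma\lambda},1\}$, gives the stated mapping into $W^{[\gamma l],\gamma\lambda}_{p,1}(\mathbb{R}^n)$.

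I expect the reassembly, rather than the per-chart analysis, to be the main obstacle, since one must control a supremum of Morrey averages over balls of \emph{all} radii for a function that is only a locally finite sum of pieces. The two delicate points are: (a) handling large balls for a possibly infinite atlas, where no uniform bound on the number of relevant charts is available and one must instead fall back on the classical $W^l_p\to W^{[\gamma l]}_p$ extension together with the normalisation $\phi(r)=1$ for $r>1$; and (b) the bookkeeping needed to justify that replacing the bounded chart $\mathcal{H}_j$ by the unbounded model of Theorem~\ref{sobolevmorrey} leaves $T_j(f\psi_j)$ unchanged, which rests on the compact support of $f\psi_j$ inside $(V_j)_d$.
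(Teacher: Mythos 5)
Your proposal is correct and follows essentially the same route as the paper, which proves Theorem~\ref{sobolevmorreybis} precisely by pasting together the local extension operators of Theorem~\ref{sobolevmorrey} via the atlas and the partition of unity in \eqref{burextgen}, deferring the details to the proof of \cite[Theorem~3.3]{fala}. Your write-up simply makes that delegated argument explicit (single-chart reduction to the unbounded model via a H\"older-preserving extension of $\varphi_j$, Leibniz reassembly with bounded chart overlap for $r\le 1$, and the fallback to Burenkov's classical $W^{l}_p\to W^{[\gamma l]}_p$ bound for $r>1$ using $\phi(r)=1$ there), which matches the intended proof.
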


The proof of Theorem~\ref{sobolevmorreybis} can be carried out by pasting together local extensions operators provided by  Theorem~\ref{sobolevmorrey} in each cuboid
of the covering of $\Omega$. This argument is described in detail in the proof of \cite[Theorem~3.3]{fala}. Finally, we can deduce the following

\begin{corollary}\label{maincor}  Let $\Omega$ be an open set    in ${\mathbb R}^n$ of class $C^{0,\gamma}$ with  $\gamma\in ]0,1]$. Let  $l\in {\mathbb N}$,  $p\in [1, \infty [$, and $\lambda>0$.  If 
 $$
 p [\gamma l]> n -\gamma \lambda \, 
 $$
 and $ [\gamma l]+\frac{\gamma \lambda -n }{p}   <1$  then there exists $c>0$ such that   for all $f\in W^{l,\lambda}_{p,1}(\Omega ) $ and for all $x,y\in \Omega $ we have
 \begin{equation}\label{besov1tris}
 |f(x)-f(y)|\le c \|f\|_{W^{l,\lambda}_{p,1}(\Omega ) }  |x-y|^{  [\gamma l]+\frac{\gamma \lambda  -n}{p}   }\, .
 \end{equation}
\end{corollary}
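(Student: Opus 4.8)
The plan is to follow the extend--embed--restrict scheme announced in the Introduction: first extend $f$ to the whole of $\mathbb{R}^n$ using Burenkov's operator, then apply a Sobolev--Morrey embedding on $\mathbb{R}^n$, and finally restrict back to $\Omega$. The decisive advantage of routing through $\mathbb{R}^n$ is that $\mathbb{R}^n$ is convex, so the segment condition $[x,y]\subset\Omega$ that obstructs the direct approaches (Theorems~\ref{besov} and \ref{daprato}) becomes vacuous, and the resulting estimate holds for \emph{every} pair $x,y\in\Omega$ with no extra geometric hypothesis.

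First I would take $f\in W^{l,\lambda}_{p,1}(\Omega)$ and apply Theorem~\ref{sobolevmorreybis} with the weight $\phi(r)=\min\{r^{\lambda},1\}$, which indeed satisfies $\phi(r)=1$ for $r>1$. Since $\phi_{\gamma}(r)=\phi(r^{\gamma})=\min\{r^{\gamma\lambda},1\}$, this produces $Tf\in W^{[\gamma l],\gamma\lambda}_{p,1}(\mathbb{R}^n)$ together with the continuity estimate
\[
\|Tf\|_{W^{[\gamma l],\gamma\lambda}_{p,1}(\mathbb{R}^n)}\le c\,\|f\|_{W^{l,\lambda}_{p,1}(\Omega)}.
\]
Next I would invoke the classical Sobolev--Morrey embedding on $\mathbb{R}^n$, i.e. the case $\gamma=1$ (so that $n_{\gamma}=n$) recalled in \eqref{intro2}, applied to $W^{[\gamma l],\gamma\lambda}_{p,1}(\mathbb{R}^n)$. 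Here the order of smoothness is $L:=[\gamma l]$ and the Morrey exponent is $\Lambda:=\gamma\lambda$, so that the two hypotheses of the corollary, namely $p[\gamma l]>n-\gamma\lambda$ and $[\gamma l]+\tfrac{\gamma\lambda-n}{p}<1$, are exactly the conditions $pL>n-\Lambda$ and $L+\tfrac{\Lambda-n}{p}<1$ required for that embedding. Since $\mathbb{R}^n$ trivially satisfies the geometric (cone/segment) requirements, one obtains, for all $x,y\in\mathbb{R}^n$,
\[
|Tf(x)-Tf(y)|\le c\,\|Tf\|_{W^{[\gamma l],\gamma\lambda}_{p,1}(\mathbb{R}^n)}\,|x-y|^{[\gamma l]+\frac{\gamma\lambda-n}{p}},
\]
where I have used that for $\gamma=1$ the metric $\delta_{1}$ is comparable to the Euclidean distance, so the Hölder exponent is read off directly against $|x-y|$.

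Finally, since $Tf=f$ on $\Omega$ by the very definition \eqref{burextgen} of the extension operator, restricting the last inequality to $x,y\in\Omega$ and combining it with the continuity estimate for $T$ yields \eqref{besov1tris}. I do not expect a serious obstacle: once Theorem~\ref{sobolevmorreybis} is in hand, the remaining work is parameter bookkeeping. The only points deserving care are the verification that the two hypotheses translate exactly into the admissibility conditions for the embedding on $\mathbb{R}^n$, and the degenerate regime $\gamma\lambda\ge n$. The hypotheses force the latter to occur only when $[\gamma l]=0$ and $\gamma\lambda>n$, in which case $W^{[\gamma l],\gamma\lambda}_{p,1}(\mathbb{R}^n)$ reduces to $\{0\}$ and the statement is vacuous; thus the meaningful regime is $[\gamma l]\ge 1$ with $0\le\gamma\lambda<n$, where the classical embedding applies verbatim.
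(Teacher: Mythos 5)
Your proposal is correct and is essentially identical to the paper's proof: the paper likewise extends $f$ via Theorem~\ref{sobolevmorreybis} to a function in $W^{[\gamma l],\gamma\lambda}_{p,1}({\mathbb R}^n)$ and then applies estimate \eqref{besov1} with $\gamma=1$ and $l$ replaced by $[\gamma l]$, where the segment condition is vacuous, before restricting to $\Omega$. Your explicit bookkeeping of the admissibility conditions and of the degenerate regime $[\gamma l]=0$, $\gamma\lambda>n$ (where the target space is trivial) is a small refinement the paper leaves implicit.
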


The proof of the previous corollary follows immediately by Theorem~\ref{sobolevmorreybis}  and estimate \eqref{besov1} applied with $\gamma =1$ and $l$ replaced 
by $ [\gamma l]$. Indeed, by Theorem~\ref{sobolevmorreybis}, any functions $f\in W^{l,\lambda}_{p,1}(\Omega ) $ is extended to the whole of $\mathbb{R}^n$ as a function 
of  $W^{[\gamma l],\gamma \lambda }_{p , 1 }({\mathbb R}^n )$ to which the classical Sobolev-Morrey Theorem applies.

\section*{ Acknowledgments}  The authors are very thankful to Prof. Victor I. Burenkov for useful discussions and  for suggesting the study  of the action of his extension operator on Sobolev-Morrey spaces defined on domains with H\"{o}lder continuous boundaries.  The authors are members of the Gruppo Nazionale per l'Analisi Matematica, la Probabilit\`a e le loro Applicazioni (GNAMPA) of the Istituto Nazionale di Alta Matematica (INdAM).

 \end{document}